\documentclass[12pt,a4paper]{article}
\usepackage[unicode=true,bookmarks=false]{hyperref}
\usepackage{amsmath,amsthm,amssymb,mathbbol}

\newcommand\address[1]{\def\theaddress{#1}}
\newcommand\addressii[1]{\def\theaddressii{#1}}
\newcommand\email[1]{\href{mailto:#1}{#1}}

\title{Cohomology for almost Lie algebroids}
\author{Melchior Gr\"utzmann\footnotemark[1]\, and Xiaomeng Xu\footnotemark[2]\\
  \normalsize\email{melchiorG@gMail.com},  \email{xuxiaomeng@pku.edu.cn}}
\address{Department of Mathematics, Northwestern Polytechnical University,  Xi'an 710129, China}
\addressii{Department of Mathematics and LMAM, Peking University,  Beijing 100871, China}

\newtheorem{defn}{Definition}[section]
\newtheorem{prop}[defn]{Proposition}
\newtheorem{cor}[defn]{Corollary}
\newtheorem{thm}[defn]{Theorem}
\newtheorem{lemma}[defn]{Lemma}
\theoremstyle{definition}
\newtheorem{ex}[defn]{Example}
\newtheorem{rem}[defn]{Remark}
\newcommand\defbb[2]{\def#1{{\mathbb{#2}}}}
\newcommand\defcal[2]{\def#1{{\mathcal{#2}}}}
\newcommand\deffrak[2]{\def#1{{\mathfrak{#2}}}}
\newcommand\defrm[2]{\def#1{{\mathrm{#2}}}}

\newcommand\XX[1]{\textbf{XX: #1}}

\def\<{\langle}
\def\>{\rangle}
\def\[{\begin{equation}}
\def\]{\end{equation}}
\def\:{\colon}
\defcal\A{A}
\def\conn_#1{\nabla_{\!#1\,}}  
\newcommand\cp{{\text{c.p.}}} 
\defrm\ud{d}
\DeclareMathOperator\Der{Der}
\deffrak\g{g}
\DeclareMathOperator\Graph{graph}
\newcommand\haszero[1][enumi]{\addtocounter{#1}{-1}}
\DeclareMathOperator\id{id}
\DeclareMathOperator\im{im}  
\defbb\k{k}  
\defcal\Lie{L}  
\defcal\M{M}
\defbb\N{N}
\newcommand\pfrac[2]{\frac{\partial #1}{\partial #2}}
\DeclareMathOperator\pt{pt}
\defbb\R{R}
\defcal\cR{R}  
\DeclareMathOperator\rk{rk}
\newcommand\subref[2]{\hyperref[#2]{\ref*{#1}-\ref*{#2}}}
\DeclareMathOperator\Symm{S}  
\DeclareMathOperator\sgn{sgn}  
\newcommand\smooth{\mathrm{C}^\infty}
\defcal\gsmooth{O}
\providecommand\strong[1]{\textbf{#1}}
\deffrak\X{X}  
\newcounter{xctr}
\defbb\Z{Z}

\begin{document}
\maketitle
{\renewcommand{\thefootnote}{\fnsymbol{footnote}}
\footnotetext[1]{\theaddress}
\footnotetext[2]{\theaddressii}
}
\abstract{We give numerous examples of almost Lie algebroids arising as Dirac structures in pre-Courant algebroids, e.g.\ from twisted Poisson structures, as well as from twisted actions of a Lie algebra.  We moreover define a cohomology for them, motivated by a Q-structure, that is trivial for (bundles of) Lie algebras but characterizes the underlying integrable distribution the image of the anchor map.
}

\section{Introduction}
Almost Lie algebroids are a generalization of Lie algebroids.  While the latter were introduced in \cite{Pra67} and have been broadly studied, see e.g.\ Mackenzie \cite{Mack05} and references therein, and permit an integration theory via Lie groupoids (see Mackenzie or \cite{CrF01} for integrability conditions) and cohomology theory (see Pradines), the former have just recently come into the interest of physicists and mathematicians in the framework of nonholonomic mechanics, see e.g.\ \cite{Cor06}.  In this paper we wish to further study their structures, namely we will introduce a cohomology theory for them and give numerous examples of almost Lie algebroids arising as Dirac structures in pre-Courant algebroids, e.g.\ from twisted Poisson structures, as well as from twisted actions of a Lie algebra.  Previous work has been done in \cite{Gru10}, but analogously to an observation in \cite{LSX12a} the weak integrability condition on the Jacobiator is unnecessary, i.e.\ automatically fulfilled.  The details can be found in Section~\ref{s:Jacob}.

The definition of the cohomology is guided by supergeometry, namely the same Q-structure as in Section~5 of \cite{Gru10}, however here we take the smaller realization on superfunctions and give explicit formulas in terms of smooth geometry, \XX{What is it?}Theorem~\ref{t:cohom}.  The reader is thus not quired to understand supergeometry, except for Subsection~\ref{ss:super} where we make the relation explicit.  
The result may thus also be interesting for topological field theory, because the underlying Q-structure permits via \cite{AKSZ97} the construction of a topological field theory out of an almost Lie algebroid.

It turns out that the cohomology is trivial for almost Lie algebras.  However for an (almost) Lie algebroid with non-vanishing anchor map it is in general nontrivial.  In particular for the tangent bundle of a smooth manifold it is isomorphic to the de Rham cohomology.  More generally for regular almost Lie algebroids (i.e.\ the anchor map has constant rank and its image is thus an integrable distribution), it computes the Lie algebroid cohomology of the underlying distribution, see Proposition~\ref{p:cohom}.  It suggests thus a definition of cohomology for singular but integrable distributions, at least in the case where these can be seen as the image of the anchor map of an almost Lie algebroid.\par\bigskip

The structure of the paper is as follows.  In section~\ref{s:prelim} we briefly summarize the needed geometric structure of an almost Lie algebroid together with its Jacobiator and exterior covariant derivative. We also give numerous examples of almost Lie algebroids arising as Dirac structures in pre-Courant algebroids, e.g.\ from twisted Poisson structures, as well as from twisted actions of a Lie algebra.  In Section~\ref{s:Jacob} we show that the Jacobiator is indeed closed under the exterior covariant derivative.  This permits us to define in Section~\ref{s:cohom} a cohomology for almost Lie algebroids in terms of geometric structures.  Examples and some properties of the cohomology are shown in Section~\ref{s:ex}.  In the appendix \ref{a:Nmfd} we give a minimalistic introduction to $\N$-manifolds and Q-structures.

\section{Almost Lie algebroids}\label{s:prelim}
\begin{defn}\label{d:aLie}  An almost Lie algebroid is a vector bundle $A\to M$ over a smooth manifold $M$ together with a skew-symmetric operation $[\cdot,\cdot]\:\Gamma(A)\wedge\Gamma(A)\to\Gamma(A)$ called the bracket and a morphism of vector bundles $\rho\:A\to TM$ called the anchor subject to the rules
\begin{align}
 [\phi,f\cdot\psi] &= \rho(\phi)[f]\cdot\psi +f\cdot[\phi,\psi], \label{Leibniz} \\
 \rho[\phi,\psi] &= [\rho(\phi),\rho(\psi)] \label{morph}
\end{align} for every $\phi,\psi\in\Gamma(A)$ and $f\in\smooth(M)$.

We call $(A,\rho,[\cdot,\cdot])$ regular iff $\im\rho\subset TM$ is a vector bundle.
\end{defn}
\eqref{Leibniz} is called Leibniz rule of the algebroid and \eqref{morph} means that $\rho$ is a morphism of brackets from $(\Gamma(A),[\cdot,\cdot])$ to $(\Gamma(TM),[\cdot,\cdot])$.

\begin{ex}\label{x:1}\begin{enumerate}\haszero\item Lie algebroids $(A,\rho,[\cdot,\cdot])$ are a kind of trivial examples of almost Lie algebroids.  They fulfill in addition the Jacobi identity
 \[ [\phi,[\psi,\chi]]+[\psi,[\chi,\phi]]+[\chi,[\phi,\psi]] = 0
 \] for all $\phi,\psi,\chi\in\Gamma(A)$.  
  \begin{enumerate}
  \item A particular case of Lie algebroid is the tangent bundle $A=TM$ with $\rho=\id$ and $[\cdot,\cdot]$ the commutator bracket of vector fields.  This is a regular Lie algebroid.
  \item To see that there are also regular Lie algebroids with non-trivial $\ker\rho$, pick a Lie algebra $(\g,[\cdot,\cdot]_\g)$ and on the bundle $A:=TM\times\g$ define the bracket
 $$ [X\oplus f\xi,Y\oplus g\eta]:= [X,Y]\oplus X[g]\xi-Y[f]\eta+fg[\xi,\eta]_\g
 $$ for $X,Y\in\Gamma(TM)$, $f,g\in\smooth(M)$, and $\xi,\eta\in\g$.  It is easy to see that this is a regular Lie algebroid with $\rho(X\oplus f\xi)=X$ and thus $\ker\rho=M\times\g$.
  \end{enumerate}
\item Starting from a regular Lie algebroid $(A,\rho,[\cdot,\cdot]_0)$, setting $F:=\ker\rho$ a vector bundle and choosing $B\in\Omega_M^2(A,F):=\Gamma(\wedge^2 A^*\otimes F)$, we see that
 \[ [\phi,\psi]_B:= [\phi,\psi]_0+B(\phi,\psi)
 \] together with $A$ and $\rho$ fulfills the axioms of an almost Lie algebroid.
\item\label{x:aLiealg}\setcounter{xctr}{\value{enumi}}  An almost Lie algebra is an almost Lie algebroid over a point $\pt$.  In particular the anchor map vanishes and the conditions \eqref{Leibniz} and \eqref{morph} are thus trivial.  We are thus dealing with an arbitrary skew-symmetric operation $[.,.]\:\wedge^\g\to\g$, where we denote the vector space $A=\g$.
\end{enumerate}
\end{ex}

\begin{rem}  According to \cite{LSX12a}, a pre-Courant algebroid $(E,(\cdot,\cdot),\rho,[\cdot,\cdot])$ is a vector bundle $E\to M$ with a non-degenerate symmetric bilinear form $(\cdot,\cdot)$, a bundle morphism $\rho\:E\to TM$, and a bracket $[\cdot,\cdot]\:\Gamma(E)\otimes\Gamma(E)\to\Gamma(E)$ with properties such as
\begin{align*}
  [\phi,f\cdot\psi] &= \rho(\phi)[f]\cdot\psi +f\cdot[\phi,\psi],  \\
  \rho[\phi,\psi] &= [\rho(\phi),\rho(\psi)], \\
  [\phi,\phi] &= \tfrac12\rho^*\ud\<\phi,\phi\>
\end{align*} for all $\phi,\psi\in\Gamma(E)$ and $\rho^*\:T^*M\to E\cong E^*$ its transpose where the latter identification is via the non-degenerate inner product $(\cdot,\cdot)$.  Note that $[\cdot,\cdot]$ is a Dorfman type bracket, i.e.\ in particular not skew-symmetric.

%
Given such a pre-Courant algebroid $(E,(\cdot,\cdot),\rho,[\cdot,\cdot])$
, then we can define 
sub-Dirac structures as isotropic subbundles $L\subset E$, i.e.\ $(L,L)=0$ for which the bracket closes as $[\Gamma(L),\Gamma(L)]\subset\Gamma(L)$.  Then $L$ is an almost Lie-algebroid, because the violation of skew-symmetry vanishes.
\end{rem}

\begin{ex}\label{x:2}\begin{enumerate}\addtocounter{enumi}{\value{xctr}}\item One particular source for pre-Courant algebroids are Cartan geometries.  Certain isotropic Lie subalgebras of the Lie algebra of the bigger structure group induce sub-Dirac structures in these twisted Courant algebroids
.  Details can be found in \cite{XX12b}.

\item\label{x:tPoisson}  Let $\Pi\in\Gamma(\wedge^2 TM)$ be a twisted Poisson bivector, i.e.\ there is a 3-form $H\in\Omega^3(M)$ (not necessarily closed) such that $\tfrac12[\Pi,\Pi]=\Pi^\#(H)$.  According to \cite{SL10b} we can define an almost Lie bracket on $T^*M\cong\Graph\Pi^\#$ by considering the latter as a Dirac structure in the twisted Courant algebroid $(TM\oplus T^*M)_H$.  This means in particular
\[ [\alpha,\beta]_{\Pi,H} = \Lie_{\Pi^\#\alpha}\beta -\Lie_{\Pi^\#\beta}\alpha +\ud(\Pi(\alpha,\beta)) +H(\Pi^\#\alpha,\Pi^\#\beta,\cdot)
\]  For $\alpha,\beta\in\Gamma(T^*M)$.  The anchor map is $\Pi^\#\:T^*M\to TM$ and a morphism of brackets.  Also the Leibniz rule is obviously fulfilled.  Therefore twisted Poisson structures give almost Lie algebroids.
\nocite{Sev01}

\item  Following \cite{LSX12a} a twisted action of a Lie algebra $\g$ on a smooth manifold $M$ is given by a pair of bundle maps $(\rho,k)$
 $$\rho\:M\times\g\to TM; \quad k\:M\times\wedge^2\g\to M\times\g
 $$ satisfying
\begin{align}
 k(e,\cdot) &=0,  \quad \forall e\in\ker(\rho), \\
 \label{b} \rho([e_1,e_2]_{M\times\g}) &= [\rho(e_1),\rho(e_2)]-\rho(k(e_1,e_2)),
\intertext{for all $e_1,e_2\in \Gamma(M\times\g)$ and the induced bracket on $\Gamma(M\times \g)$}
  [e_1,e_2]_{M\times\g} &=\Lie_{\rho(e_1)}e_2-\Lie_{\rho(e_2)}e_1+[e_1,e_2]_\g,  \nonumber
\end{align}
 where $[e_1,e_2]_\g$ is the pointwise Lie bracket of two sections $e_1,e_2\in\Gamma(M\times\g)=\smooth(M,\g)$ and $\Lie$ denotes the action of vector fields on the trivial vector bundle $M\times\g$.

Just as Lie algebroids can arise from Lie algebra actions, we have almost Lie algebroids arising from these twisted actions in the following way:

Twist the bracket $[\cdot,\cdot]_{M\times\g}$ by $k$ as follows
 \[
 [e_1,e_2]_{k}=[e_1,e_2]_{M\times\g}+k(e_1,e_2).
 \]
 Equation \eqref{b} implies that $\rho$ is a morphism of brackets from this twisted bracket $[\cdot,\cdot]_{k}$ and also the Leibniz rule still holds.  Therefore $(M\times\g,[\cdot,\cdot]_k,\rho)$ is an almost Lie algebroid.  A nontrivial example of a twisted action can arise in Cartan geometry, in this case the underlying vector bundle is just the tangent bundle, see \cite{LX12} for more details about that.
\end{enumerate}
\end{ex}

Arbitrary almost Lie algebroids are in general no Lie algebroids.  To measure the difference, we introduce the following notion:
\begin{defn}  Given an almost Lie algebroid $(A,\rho,[\cdot,\cdot])$, then its Jacobiator is the map $J\:\wedge^3\Gamma(A)\to\Gamma(A)$,
 \[ J(\phi,\psi,\chi):= [\phi,[\psi,\chi]] +[\psi,[\chi,\phi]] +[\chi,[\phi,\psi]].
 \]
\end{defn}
It is easy to see that $J$ is $\smooth$-linear in its three arguments and moreover $\rho\circ J=0$.  Therefore $J\in\Omega_M^3(A,\ker\rho)$.

Remember that $(A,\rho,[\cdot,\cdot])$ is a Lie algebroid iff $J=0$.\par\bigskip

It is also possible to generalize the computations to non-regular almost Lie algebroids.  Analogous to \cite[Chap.~2, Rem.~3]{Gru10} we give the following definition:  Let $\k$ be a field and $\cR$ be an associative commutative unital algebra over $\k$.  Tensor products no further specified are taken over $\cR$, while $\otimes_\k$ is the tensor product over $\k$.  Let further $\Der(R)$ denote the $\k$-linear derivations of $\cR$.
\begin{defn}\label{d:aLR}  An almost Lie--Rinehart algebra $(\cR,\A,\rho,[\cdot,\cdot])$ is a ring $\cR$ together with a module $\A$, a module homomorphism $\rho\:\A\to\Der(\cR)$, and a $\k$-bilinear skew-symmetric bracket $[\cdot,\cdot]\:\A\otimes_\k\A\to\A$ subject to
\begin{align*}
  \rho[\phi,\psi] &= [\rho(\phi),\rho(\psi)], \\
  [\phi, f\cdot\psi] &= \rho(\phi)[f]\cdot\psi +f\cdot[\phi,\psi]
\end{align*} for all $\phi,\psi\in\A$ and $f\in\cR$.
\end{defn}
Obviously the sections $\Gamma(A)$ of an almost Lie algebroid $(A,\rho,[\cdot,\cdot])$ form an almost Lie--Rinehart algebra.  It is a Lie--Rinehart algebra as introduced by Rinehart in \cite{Rin63} iff in addition $J=0$ for the analogous definition of the Jacobiator.

\begin{rem}\label{r:sing}
We now define the ``smooth sections $\Gamma(\ker\rho)$'' as $v\in\A$ with $\rho(v)=0$.  Correspondingly we define ``$\Omega_M^p(A,\Symm^q\ker\rho)$'' as the module of all $v\in\Omega_M^p(A,\Symm^qA):=\wedge^q\A^*\otimes \Symm^q\A$ such  that $\tilde\rho(v)=0$ for $\tilde\rho\:\wedge^\bullet\A^*\otimes \Symm^q\A\to \wedge^\bullet\A^*\otimes \Symm^{q-1}\A\otimes\Der(\cR): \alpha\otimes\psi_1\psi_2\mapsto \alpha\otimes(\psi_2\otimes\rho(\psi_1)+\psi_1\otimes\rho(\psi_2))$ for $\alpha\in\wedge^\bullet\A^*$, $\psi_i\in \A$, extended linearly and correspondingly for more factors $\psi_i$.  By $\Gamma((\ker\rho)^*)$ we mean the elements of the dual module to ``$\Gamma(\ker\rho)$''.

In what follows we will stick to the notion $\Gamma(\ker\rho)$ and $\Omega_M^p(A,\ker\rho)$ correspondingly to keep similarity with the usual differential geometry of vector bundles.  The attentive reader will however easily be able to replace this with the corresponding notion for almost Lie--Rinehart algebras.
\end{rem}

Another important notion is the following generalization of a connection.
\begin{defn}  Given a vector bundle $A\to M$ together with a morphism $\rho\:A\to TM$ and a second vector bundle $V\to M$ over the same base.  An $A$-connection $\nabla$ on $V$ is an $\R$-linear map $\nabla\:\Gamma(A)\otimes\Gamma(V)\to\Gamma(V)$ subject to the rules
\begin{align}
  \conn_{f\phi} v &= f\conn_\phi v, \\
  \conn_\phi (f\cdot v) &= \rho(\phi)[f]\cdot v +f\cdot\conn_\phi v
\end{align} for every $\phi\in\Gamma(A)$, $v\in\Gamma(V)$ and $f\in\smooth(M)$.
\end{defn}
\begin{ex}\begin{enumerate}\item  Given a vector bundle $V\to M$, then a usual connection $\nabla$ on $V$ is a $TM$-connection on $V$.  In particular the Levi-Civita connection of Riemmanian geometry is a $TM$-connection on $TM$.
\item  Given a morphism $\rho\:A\to TM$ and a trivial vector bundle $V$, then it is possible to construct a connection as follows.  Let $e_b\in\Gamma(V)$, $b=1,\dots,\rk V$ be a global frame of $V$ and define
 \[ \nabla_\phi(v^be_b):= \rho(\phi)[v^b]\cdot e_b
 \] where $v^b\in\smooth(M)$ and $\phi\in\Gamma(A)$ and we use Einstein's sum convention.

Starting with an arbitrary $V$ and given a smooth partition of unity on $M$ subordinate to a cover of charts of $M$ trivializing $V$, then it is possible to glue the above construction over the charts to a connection on $V$.
\end{enumerate}
\end{ex}

\begin{prop}  Given an almost Lie algebroid $(A,\rho,[\cdot,\cdot])$, then the operator $D$ for $\alpha\in\Omega_M^p(A):=\Gamma(\wedge^p A^*)$ and $\psi_i\in\Gamma(A)$
\[\begin{split} D\alpha(\psi_0,\dots,\psi_p) :=&\, \sum_{i=0}^p (-1)^i \rho(\psi_i)[\alpha(\psi_0,\dots,\hat\psi_i,\dots,\psi_p)] \\
  &+\sum_{i<j} (-1)^{i+j}\alpha([\psi_i,\psi_j],\psi_0,\dots,\hat\psi_i,\dots,\hat\psi_j,\dots,\psi_p)
\end{split}  \label{D}
\] maps $\Omega_M^p(A)\to\Omega_M^{p+1}(A)$.
\end{prop}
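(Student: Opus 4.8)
The claim is that $D\alpha$, which a priori is only an $\mathbb{R}$-multilinear alternating map $\Gamma(A)^{\times(p+1)}\to\smooth(M)$, is in fact $\smooth(M)$-multilinear, so that by the usual tensoriality principle for vector bundles it determines a genuine section of $\wedge^{p+1}A^*$. My plan is therefore to establish two things: that $D\alpha$ is alternating, and that it is $\smooth(M)$-linear in each of its $p+1$ arguments. It is worth flagging at the outset that neither the morphism property \eqref{morph} nor any Jacobi-type identity is needed here; the only ingredients that enter are the skew-symmetry of $[\cdot,\cdot]$, the Leibniz rule \eqref{Leibniz}, the $\smooth(M)$-linearity of the anchor $\rho$ as a bundle map, and the fact that each $\rho(\psi)$ acts as a derivation on $\smooth(M)$.

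The first point, alternation, I would treat by noting that formula \eqref{D} has exactly the shape of the classical Koszul (de Rham / Chevalley--Eilenberg) differential: the signs $(-1)^i$ on the anchor terms and $(-1)^{i+j}$ on the bracket terms, together with the omitted arguments, are arranged precisely so that interchanging two adjacent arguments $\psi_l\leftrightarrow\psi_{l+1}$ reverses the overall sign. I would verify this by the standard transposition bookkeeping, using that $\alpha$ is already alternating and that $[\psi_l,\psi_{l+1}]=-[\psi_{l+1},\psi_l]$, so that the single bracket term involving the swapped pair is itself antisymmetric. Since adjacent transpositions generate the symmetric group, this shows $D\alpha$ is alternating.

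The real work is the second point, tensoriality. Because $D\alpha$ is alternating, it suffices to prove $\smooth(M)$-linearity in one slot, i.e. that $D\alpha(\dots,f\psi_k,\dots)=f\,D\alpha(\dots,\psi_k,\dots)$ for $f\in\smooth(M)$. Substituting $f\psi_k$ and expanding, each term splits into $f$ times the corresponding term of $D\alpha$ plus a correction proportional to some $\rho(\psi_i)[f]$. Two families of corrections arise. In the anchor sum the index $i=k$ contributes nothing extra, since $\rho(f\psi_k)=f\rho(\psi_k)$; for $i\neq k$ the derivation $\rho(\psi_i)$ striking the factor $f$ inside $\alpha(\dots,f\psi_k,\dots)$ yields a term $(-1)^i\rho(\psi_i)[f]\,\alpha(\dots,\psi_k,\dots)$. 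In the bracket sum the only corrections come from pairs containing the index $k$, where the Leibniz rule \eqref{Leibniz}, together with skew-symmetry to handle the case that $k$ is the larger index, replaces $[\psi_i,f\psi_k]$ (respectively $[f\psi_k,\psi_j]$) by $f[\psi_i,\psi_k]\pm\rho(\psi_i)[f]\psi_k$, and the extra summand, inserted into the first slot of $\alpha$, produces a correction of the form $\pm\rho(\psi_i)[f]\,\alpha(\psi_k,\dots)$. The single step I expect to be the genuine obstacle is checking that these two families cancel: I would reindex the bracket corrections by moving $\psi_k$ out of the first slot of $\alpha$ back into its natural position, which generates exactly the sign needed to match the $(-1)^i$ of the anchor-sum corrections, with the antisymmetry of $\alpha$ merging the two sign cases into one uniform pattern; a careful line-by-line comparison then shows the contributions are equal and opposite. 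Once this cancellation is confirmed, $D\alpha$ is $\smooth(M)$-multilinear and alternating, hence lies in $\Omega_M^{p+1}(A)=\Gamma(\wedge^{p+1}A^*)$, as asserted.
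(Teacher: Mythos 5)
Your proposal is correct and is exactly the argument the paper has in mind: the paper dismisses this proposition with the one-line remark that it follows by ``straight-forward computations analogous to the Lie algebroid differential on $TM$,'' and your fleshed-out version (alternation via the standard Koszul sign bookkeeping, tensoriality via the cancellation between the $\rho(\psi_i)[f]$ corrections from the anchor sum and those produced by the Leibniz rule \eqref{Leibniz} in the bracket sum, after moving $\psi_k$ out of the first slot of $\alpha$) is precisely that standard computation, with the signs working out as you describe. Your observation that only the Leibniz rule, skew-symmetry, and $\smooth(M)$-linearity of $\rho$ are needed --- and not the morphism property \eqref{morph} or any Jacobi identity --- is also accurate, consistent with the paper's later remark that $D^2\neq0$ in general.
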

This can be proved with straight-forward computations analogous to the Lie algebroid differential on $TM$.

Starting from here we will fix a (regular) almost Lie algebroid $(A,\rho,[\cdot,\cdot])$ and denote $F:=\ker\rho\subset A$ as well as $t\:F\hookrightarrow A$ the corresponding embedding.

The following was already observed in \cite{Gru10}.
\begin{prop}  Given the situation above, then
 \[ t(\conn_\phi v):=[\phi,t(v)]
 \] for $\phi\in\Gamma(A)$ and $v\in\Gamma(F)$ is an $A$-connection on $F$.
\end{prop}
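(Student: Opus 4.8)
The plan is to first verify that the right-hand side $[\phi,t(v)]$ genuinely lies in $\Gamma(\ker\rho)$, so that the formula determines a section of $F$, and only then to check the two defining axioms of an $A$-connection. The overview of the argument is thus: (i) well-definedness, which is the one step that really uses the hypotheses; (ii) $\smooth(M)$-linearity in the first slot; (iii) the Leibniz rule in the second slot.

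For well-definedness I would apply the anchor to $[\phi,t(v)]$ and invoke the bracket-morphism property \eqref{morph}. Since $t(v)\in\Gamma(\ker\rho)$ we have $\rho(t(v))=0$, so $\rho[\phi,t(v)]=[\rho(\phi),\rho(t(v))]=[\rho(\phi),0]=0$; hence $[\phi,t(v)]$ is a section of $\ker\rho$. Here the regularity hypothesis enters: because $\im\rho$, and therefore $F=\ker\rho$, is a subbundle, every $\rho$-annihilated section of $A$ is of the form $t(w)$ for a unique $w\in\Gamma(F)$. As $t$ is injective, $\conn_\phi v$ is then unambiguously defined by the requirement $t(\conn_\phi v)=[\phi,t(v)]$.

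Next I would establish $\smooth(M)$-linearity in the first argument, $\conn_{f\phi}v=f\conn_\phi v$. Because the Leibniz rule \eqref{Leibniz} constrains only the second slot of the bracket, the natural device is skew-symmetry: $[f\phi,t(v)]=-[t(v),f\phi]=-\rho(t(v))[f]\cdot\phi-f[t(v),\phi]$ by \eqref{Leibniz}, and the first term vanishes since $\rho(t(v))=0$, leaving $f[\phi,t(v)]=t(f\conn_\phi v)$; applying the inverse of $t$ gives the claim. For the Leibniz axiom in the second argument, $\conn_\phi(f\cdot v)=\rho(\phi)[f]\cdot v+f\conn_\phi v$, I would apply $t$ (which is $\smooth(M)$-linear, so $t(f\cdot v)=f\cdot t(v)$) and use \eqref{Leibniz} directly: $[\phi,f\cdot t(v)]=\rho(\phi)[f]\cdot t(v)+f[\phi,t(v)]=t\bigl(\rho(\phi)[f]\cdot v+f\conn_\phi v\bigr)$, whence the result by injectivity of $t$.

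Additivity and $\R$-bilinearity are immediate from the corresponding properties of the bracket, so nothing further is needed. The only genuinely delicate step is the well-definedness: it is precisely there that both the morphism property \eqref{morph} (to land in $\ker\rho$) and the regularity assumption (so that $F$ is a vector bundle and $t$ has a well-defined inverse on its image) are essential. The two axiom verifications, by contrast, are routine consequences of skew-symmetry and the Leibniz rule.
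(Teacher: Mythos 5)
Your proof is correct and is exactly the standard argument that the paper itself omits (it merely cites \cite{Gru10} for this observation): well-definedness via the morphism property \eqref{morph} applied to $\rho[\phi,t(v)]$, first-slot $\smooth(M)$-linearity via skew-symmetry combined with \eqref{Leibniz}, and the second-slot Leibniz rule directly from \eqref{Leibniz}. One minor caveat: regularity is not strictly essential for well-definedness, since by Remark~\ref{r:sing} the paper defines $\Gamma(\ker\rho)$ in the singular case simply as $\{v\in\A : \rho(v)=0\}$ with $t$ the inclusion, making that step tautological, although in the regular vector-bundle picture your appeal to it is exactly right.
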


\begin{cor}  Given the notation of the previous proposition, then $D$ can be extended to $\alpha\in\Omega_M^p(A,F)$ via
\[\begin{split} D\alpha(\psi_0,\dots,\psi_p) :=&\, \sum_{i=0}^p (-1)^i \conn_{\psi_i}(\alpha(\psi_0,\dots,\hat\psi_i,\dots,\psi_p)) \\
  &+\sum_{i<j}(-1)^{i+j} \alpha([\psi_i,\psi_j],\psi_0,\dots,\hat\psi_i,\dots,\hat\psi_j,\dots,\psi_p)
\end{split}
\]
\end{cor}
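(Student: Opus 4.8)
The content of the statement is that the right-hand side is a well-defined element of $\Omega_M^{p+1}(A,F)=\Gamma(\wedge^{p+1}A^*\otimes F)$; my plan is therefore to verify the three properties that characterize such an element: that $D\alpha$ is $F$-valued, that it is alternating, and that it is $\smooth(M)$-linear in each of its $p+1$ arguments. The first is immediate: since $\alpha$ takes values in $\Gamma(F)$ and, by the preceding proposition, $\nabla$ is an $A$-connection on $F$, both $\nabla_{\psi_i}(\alpha(\dots))$ and $\alpha([\psi_i,\psi_j],\dots)$ lie in $\Gamma(F)$, whence $D\alpha(\psi_0,\dots,\psi_p)\in\Gamma(F)$. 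The alternation in the $\psi_i$ is a purely combinatorial property of the Koszul-type formula and holds for any skew bracket and any connection, exactly as for the scalar-valued operator \eqref{D}; I would only check it on a single adjacent transposition, using skew-symmetry of $[\cdot,\cdot]$ and linearity of $\alpha$. The substance of the statement is the $\smooth(M)$-linearity, which I treat next.

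The key idea is to reduce the computation to the already-established scalar case \eqref{D}. Comparing the two formulas, the only difference is that the anchor-derivative $\rho(\psi_i)[\alpha(\dots)]$ of \eqref{D} is replaced by the covariant derivative $\nabla_{\psi_i}(\alpha(\dots))$. The two connection axioms are tailored precisely to this replacement: $\nabla$ is $\smooth(M)$-linear in its lower slot, $\nabla_{f\psi_i}v=f\,\nabla_{\psi_i}v$, and it obeys the Leibniz rule $\nabla_{\psi_i}(f\cdot v)=\rho(\psi_i)[f]\cdot v+f\,\nabla_{\psi_i}v$ with exactly the same leading coefficient $\rho(\psi_i)[f]$ that drove the scalar computation. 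Hence every manipulation used to prove tensoriality of \eqref{D} transcribes verbatim, with the product $\rho(\psi_i)[f]\cdot\alpha(\dots)$ now read as the action of the function $\rho(\psi_i)[f]\in\smooth(M)$ on the section $\alpha(\dots)\in\Gamma(F)$.

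Concretely, by alternation it suffices to test linearity in one argument, say $\psi_0\mapsto f\psi_0$ with $f\in\smooth(M)$. The $i=0$ term of the first sum is unchanged up to the factor $f$, because $\nabla_{f\psi_0}=f\,\nabla_{\psi_0}$. The terms with $i\geq1$ of the first sum produce, via the connection Leibniz rule, extra summands $(-1)^i\rho(\psi_i)[f]\cdot\alpha(\psi_0,\dots,\hat\psi_i,\dots,\psi_p)$. The terms with $i=0$ of the second sum produce, via the skew-symmetrised bracket Leibniz rule $[f\psi_0,\psi_j]=f[\psi_0,\psi_j]-\rho(\psi_j)[f]\psi_0$, extra summands $(-1)^{j}\bigl(-\rho(\psi_j)[f]\bigr)\cdot\alpha(\psi_0,\dots,\hat\psi_j,\dots,\psi_p)$. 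Relabelling $i\to j$, these two families carry opposite signs and, crucially, the same argument ordering, so they cancel and leave $D\alpha(f\psi_0,\psi_1,\dots,\psi_p)=f\,D\alpha(\psi_0,\dots,\psi_p)$.

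I expect the only genuine obstacle to be bookkeeping: matching the signs and, above all, the positions of the arguments in the two families so that the cancellation is exact. In particular one must check that moving the bracket entry $[\psi_0,\psi_j]$ into the first slot of $\alpha$ yields precisely the ordering $\psi_0,\dots,\hat\psi_j,\dots,\psi_p$ that appears in the first sum, so that no extra transposition sign intervenes. Once the single-slot identity is confirmed, alternation upgrades it to $\smooth(M)$-linearity in every slot, and together with the $F$-valuedness this shows that $D$ indeed maps $\Omega_M^p(A,F)$ to $\Omega_M^{p+1}(A,F)$.
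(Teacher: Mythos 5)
Your proposal is correct and is exactly the ``straight-forward computation'' the paper has in mind: it gives no explicit proof of this corollary, deferring (as for the scalar operator \eqref{D}) to the standard Koszul-type verification of tensoriality, skewness and $F$-valuedness, and your cancellation of the $(-1)^i\rho(\psi_i)[f]$ terms against those from $[f\psi_0,\psi_j]=f[\psi_0,\psi_j]-\rho(\psi_j)[f]\psi_0$, with the argument orderings matching, is precisely that computation. The two connection axioms substitute for the anchor-derivative exactly as you say, so nothing further is needed.
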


Note that $D$ in general does not square to 0.  Namely $D$ on $\Omega_M^\bullet(A)$ squares to 0 iff the Jacobiator vanishes.  In this case $D$ is just the Lie algebroid differential.  Since $F\subset A$ the vanishing of the Jacobiator also implies that $D^2$ vanishes on $\Omega_M^\bullet(A,F)$.

The connection $\nabla$ naturally extends to $F^*$ via
\[ \<\conn_\psi\beta,v\>_F = \rho(\psi)\<\beta,v\>_F-\<\beta,\conn_\psi v\>_F.
\] for $\psi\in\Gamma(A)$, $\beta\in\Gamma(F^*)$, and $v\in\Gamma(F)$.  $\<\cdot,\cdot\>_F$ denotes the canonical pairing between $F$ and $F^*$.

Analogously also $D$ extends to $\Omega_M^\bullet(A,F^*)$.  We further extend $\nabla$ to $\Symm^\bullet F^*$ the symmetric algebra over $F$ via the Leibniz rule
\[ \conn_\psi (\alpha\cdot\beta)= \conn_\psi\alpha\cdot\beta +\alpha\cdot\conn_\psi\beta
\] for $\alpha,\beta\in\Gamma(F^*)$ and $\psi\in\Gamma(A)$ and correspondingly for more factors.  $D$ is extended in an analogous way.

Note that $D^2$ still does not square to 0 unless the Jacobiator vanishes.

From \cite{Gru10} we also take the notion of H-twisted Lie algebroid.
\begin{defn}\label{d:HLie}  An H-twisted Lie algebroid $(A,\rho,[\cdot,\cdot],J)$ is an almost Lie algebroid such that the Jacobiator fulfills $DJ=0$.
\end{defn}
In the next section we will examine what is the content of this additional condition.

\section{Relation to H-twisted Lie algebroids}\label{s:Jacob}
Throughout this section $(A,\rho,[\cdot,\cdot])$ will be an almost Lie algebroid. and $J\:\wedge^3A\to\ker\rho$ its Jacobiator.  Remember the notion of H-twisted Lie algebroid in Definition~\ref{d:HLie} introduced in \cite{Gru10}.  We can now simplify its definition as follows.
\begin{thm}  Every almost Lie algebroid is an H-twisted Lie algebroid, i.e.\ $\rho\circ J=0$, $J\in\Omega_M^3(A,\ker\rho)$ and $DJ=0$.
\end{thm}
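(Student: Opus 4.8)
The three claims $\rho\circ J=0$ and $J\in\Omega_M^3(A,\ker\rho)$ are already established in the text immediately following the definition of the Jacobiator (it was noted there that $J$ is $\smooth$-linear and $\rho\circ J=0$). The only substantial content is therefore $DJ=0$, and the plan is to prove this by a direct computation using the definition \eqref{D} of $D$ (extended to $\Omega_M^3(A,F)$ via the connection $\nabla$ given by $t(\conn_\phi v)=[\phi,t(v)]$) together with the bracket axioms \eqref{Leibniz}, \eqref{morph}.

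The approach I would take is to expand $DJ(\psi_0,\psi_1,\psi_2,\psi_3)$ using the corollary's formula, which has two groups of terms: the connection terms $\sum_i(-1)^i\conn_{\psi_i}J(\dots\hat\psi_i\dots)$ and the bracket terms $\sum_{i<j}(-1)^{i+j}J([\psi_i,\psi_j],\dots\hat\psi_i\dots\hat\psi_j\dots)$. Since $J$ takes values in $F=\ker\rho$, the connection $\conn_{\psi_i}$ acting on $J(\dots)$ is, via the defining identity $t(\conn_\phi v)=[\phi,t(v)]$, simply the bracket $[\psi_i,J(\dots)]$. Thus every term in $DJ$ becomes a threefold-nested bracket of the $\psi$'s. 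The whole expression is then an alternating sum over the four arguments of iterated brackets $[\cdot,[\cdot,[\cdot,\cdot]]]$, and the claim is that this sum vanishes identically.

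The key observation that makes this tractable is that the vanishing is really an instance of a graded Jacobi-type identity at the level of the nested brackets, so I would organize the proof around the algebraic identity
\[
 \sum_{i<j}(-1)^{i+j}\,[\,\dots\,] + \sum_i (-1)^i\,[\psi_i,[\,\dots\,]] = 0,
\]
grouping the expanded terms by how the four sections are distributed among the three bracket slots. Concretely, after writing $J$ as the cyclic sum of double brackets and substituting into both groups of terms of $DJ$, every resulting term is a product of three brackets acting on four arguments; I would show that these cancel in pairs by matching each term coming from the connection part against a term coming from the $J([\psi_i,\psi_j],\dots)$ part with the opposite sign. The bookkeeping is purely combinatorial once one notes that no anchor or Leibniz correction terms survive, precisely because $J$ is $\smooth$-linear (so $\conn$ produces only the bracket, with no $\rho(\psi_i)[f]$ term to worry about).

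The main obstacle will be the sign and index bookkeeping: there are many terms (twelve from the connection part after expanding each $J$ into its three cyclic summands, and a matching number from the bracket part), and one must verify that the signs $(-1)^i$, $(-1)^{i+j}$, and the signs internal to the cyclic definition of $J$ combine so that every term cancels exactly one other. I would handle this by fixing a systematic normal form for each iterated bracket (e.g.\ ordering the arguments and recording the permutation sign) and then arguing that the map sending each connection-term to its cancelling bracket-term is a sign-reversing involution on the set of all terms. The conceptual reason the identity holds — and the cleanest way to present it — is that $DJ=0$ is the shadow of $Q^2=0$ for the associated $Q$-structure on the $\N$-manifold (the same one referenced from \cite{Gru10} and treated in the appendix), so an alternative, less computational route would be to derive $DJ=0$ as a component of the single graded equation $Q^2=0$, reducing the combinatorial check to the homological vector field squaring to zero.
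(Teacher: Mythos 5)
Your reduction to the single identity $DJ=0$ and your expansion strategy coincide with the paper's proof: since $J$ is $F$-valued and $\nabla$ is defined by $t(\nabla_{\!\phi}v)=[\phi,t(v)]$, every term of $DJ(\psi_0,\psi_1,\psi_2,\psi_3)$ is indeed a composite of three brackets in the four arguments. But your cancellation scheme has a concrete combinatorial gap: the terms do \emph{not} all cancel in (connection-term, bracket-term) pairs, and your count is wrong. The connection part contributes $4\times3=12$ terms, all of ``caterpillar'' shape $[\psi_a,[\psi_b,[\psi_c,\psi_d]]]$. The bracket part contributes $6\times3=18$ terms, not a ``matching number'': for each pair $i<j$, the cyclic expansion of $J([\psi_i,\psi_j],\psi_k,\psi_l)$ yields two caterpillar terms plus the balanced ``square'' term $[[\psi_i,\psi_j],[\psi_k,\psi_l]]$, whose bracket tree can never match any connection term. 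So the sign-reversing involution between the two groups, as you describe it, does not exist; it accounts only for the $12+12$ caterpillar terms. The six square terms must cancel among themselves, and they do, but by a separate argument: the term for $(i,j)$ and the term for the complementary pair $(k,l)$ carry signs $(-1)^{i+j}$ and $(-1)^{k+l}$, which are equal since $i+j+k+l=6$, while the summands $[[\psi_i,\psi_j],[\psi_k,\psi_l]]$ and $[[\psi_k,\psi_l],[\psi_i,\psi_j]]$ are opposite by skew-symmetry of the outer bracket. This is exactly the residual step in the paper's proof: after the caterpillar cancellation the authors are left with $\sum_{\sigma\in A_4}[[\psi_{\sigma1},\psi_{\sigma2}],[\psi_{\sigma3},\psi_{\sigma4}]]$, which they kill by antisymmetrizing over $S_4$ (equivalently, by the fixed-point-free sign-reversing involution $\sigma\mapsto\sigma\cdot(13)(24)$ on $A_4$). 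Once you add this square-term cancellation, your direct computation closes and is essentially the paper's argument, merely organized by tree shapes rather than by permutation sums.

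Your proposed ``cleaner'' fallback via the $Q$-structure is circular in this setting and should be dropped or reworked. In the cited reference and in Subsection~\ref{ss:super} of this paper, the logical direction is that $DJ=0$ \emph{together with the other axioms implies} $[Q,Q]=0$ for the homological vector field on $A[1]\oplus F[2]$; indeed, $DJ=0$ is precisely one of the component equations of $[Q,Q]=0$. One therefore cannot derive $DJ=0$ from $Q^2=0$ without first establishing $Q^2=0$ by a coordinate computation that includes verifying the very identity in question --- which is the whole content of the theorem, namely that for almost Lie algebroids this component holds automatically.
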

\begin{proof}  We already know that $J\in\Omega_M^3(A,\ker\rho)$.  We can therefore apply the exterior covariant derivative $D$ as follows.
Let $\psi_i\in\Gamma(A)$ be given and compute
\begin{align*}
  \<DJ,\psi_1\wedge\psi_2&\wedge\psi_3\wedge\psi_4\> = \conn_{\psi_1}J(\psi_2,\psi_3,\psi_4) -J([\psi_1,\psi_2],\psi_3,\psi_4) +\cp \\
  &= \sum_{\sigma\in A_4} [\psi_{\sigma1},[\psi_{\sigma2},[\psi_{\sigma3},\psi_{\sigma4}]]] -\sum_{\sigma\in A_4} [[\psi_{\sigma1},\psi_{\sigma2}],[\psi_{\sigma3},\psi_{\sigma4}]] -\dots\\
  &\quad -\sum_{\sigma\in A_4}[\psi_{\sigma3},[\psi_{\sigma4},[\psi_{\sigma1},\psi_{\sigma2}]]]
\intertext{where $S_4$ is the symmetric group in 1,2,3,4 and $A_4\subset S_4$ the even permutations.}
  &= \sum_{\sigma\in A_4} [[\psi_{\sigma1},\psi_{\sigma2}],[\psi_{\sigma3},\psi_{\sigma4}]]  \\
  &= \tfrac12\sum_{\sigma\in S_4} \sgn(\sigma) [[\psi_{\sigma1},\psi_{\sigma2}],[\psi_{\sigma3},\psi_{\sigma4}]]
\intertext{where $\sgn\:S_4\to\{\pm1\}$ is the sign of a permutation.}
  &= \tfrac14\sum_{\sigma\in S_4} \sgn(\sigma)\big([[\psi_{\sigma1},\psi_{\sigma2}],[\psi_{\sigma3},\psi_{\sigma4}]] - [[\psi_{\sigma3},\psi_{\sigma2}],[\psi_{\sigma1},\psi_{\sigma4}]] \big)  \\
  &= \tfrac14\sum_{\sigma\in S_4} \sgn(\sigma)\big([[\psi_{\sigma1},\psi_{\sigma2}],[\psi_{\sigma3},\psi_{\sigma4}]] + [[\psi_{\sigma3},\psi_{\sigma4}],[\psi_{\sigma1},\psi_{\sigma2}]] \big)
\intertext{where in the last two steps we exchanged two elements in the second summand.}
 &= \tfrac14\sum_{\sigma\in S_4} \sgn(\sigma)\big([[\psi_{\sigma1},\psi_{\sigma2}],[\psi_{\sigma3},\psi_{\sigma4}]] - [[\psi_{\sigma1},\psi_{\sigma2}],[\psi_{\sigma3},\psi_{\sigma4}]] \big)  \\
\intertext{where we exchanged the arguments of the outer bracket in the second summand.  But now it vanishes and the theorem is proven}
 &=0.
 \end{align*}
\end{proof}

\section{Another cohomology}\label{s:cohom}
The Theorem in the last section permits us to give another geometric construction of cohomology for almost Lie algebroids as stated in Theorem~\ref{t:cohom}.

Throughout this section let $(A,\rho,[\cdot,\cdot])$ be an almost Lie algebroid and $F:=\ker\rho$ be a (possibly singular) vector subbundle of $A$ -- $\Gamma(\ker\rho)$ defined as in Remark~\ref{r:sing}.  We denote again by $t\:F\hookrightarrow A$ the embedding.

We will show that the operator $D\:C^n\to C^{n+1}$ with
\[ C^n:=\bigoplus_{p+2q=n}\Gamma(\wedge^pA^*\otimes \Symm^q F^*)  \label{cochains}
\] can be modified such that it squares to 0.

First note that $D$ fits the following definition
\begin{defn}\label{d:odd}  An $\R$-linear map $D\:C^\bullet\to C^{\bullet+1}$ is said to be an odd derivative iff
\[\label{oddLeibniz}  D(\gamma_1\gamma_2) = D(\gamma_1)\gamma_2 +(-1)^{|\gamma_1|}\gamma_1D(\gamma_2).
\] for all $\gamma_1\in C^{|\gamma_1|}$ and $\gamma_2\in C^\bullet$.
\end{defn}
Here and in what follows $|\gamma|\in\N$ will denote the degree of a homogeneous $\gamma\in C^\bullet$.

\begin{rem}\label{r:gen}
Note that moreover $C^\bullet$ is an associative graded commutative algebra generated by $C^0=\smooth(M)$, $C^1=\Gamma(E^*)$ and $\Gamma(F^*)\subset C^2$.  This will simplify our computations in what follows.
\end{rem}

\begin{defn}  We define the map $\hat\delta\:C^n\to C^{n+1}$ via
 \[ \hat\delta(\alpha_1\wedge\alpha_2\otimes\beta) = \alpha_2\otimes t^*(\alpha_1)\beta -\alpha_1\otimes t^*(\alpha_2)\beta  \label{deltahat}
 \] for $\alpha_i\in A^*$ and $\beta\in F^*$ all over the same base point $x\in M$ and $t^*\:E^*\to F^*$ the transpose of $t$.  $\hat\delta$ is extended correspondingly to more or less factors in $E^*$.
\end{defn}
Note that also $\hat\delta$ is an odd derivative in the sense of Definition~\ref{d:odd}.  Moreover it squares to $\hat\delta^2=0$.

Remember that $J\in\Omega_M^3(A,F)$.  We can therefore define the map:
\begin{defn}  $\hat{J}\:C^n\to C^{n+1}$ via
\[  \hat{J}(\alpha\otimes\beta)= \alpha\wedge\<\beta,J(\cdot,\cdot,\cdot)\>_F   \label{Jhat}
\] for $\alpha\in\Omega_M^p(A)$ and $\beta\in\Gamma(F^*)$ and extended symmetrically for higher powers of $F^*$.
\end{defn}
Note that also $\hat{J}$ is an odd derivative and squares to 0.

The following observation from super geometry will be useful in calculations:
\begin{lemma}\label{l:commut}  Let $A,B\:C^\bullet\to C^{\bullet+1}$ two odd derivations, then $D:=[A,B]:=A\circ B+B\circ A$ maps $C^\bullet\to C^{\bullet+2}$ and fulfills the ungraded Leibniz rule
\[ D(\gamma_1\gamma_2) = D(\gamma_1)\gamma_2+\gamma_1D(\gamma_2)  \label{evenLeibniz}
\] for all $\gamma_i\in C^\bullet$.
\end{lemma}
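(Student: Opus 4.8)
The plan is to verify the two assertions directly. The degree statement is immediate: since $A$ and $B$ each raise degree by one, both composites $A\circ B$ and $B\circ A$ raise degree by two, so $D=A\circ B+B\circ A$ sends $C^\bullet$ to $C^{\bullet+2}$. For the Leibniz rule, since $D$ is $\R$-linear it suffices to check \eqref{evenLeibniz} on homogeneous elements $\gamma_1,\gamma_2$ (one could further reduce to the algebra generators of Remark~\ref{r:gen}, though this is not needed), which I do by applying the odd Leibniz rule \eqref{oddLeibniz} twice.

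First I would expand $A\bigl(B(\gamma_1\gamma_2)\bigr)$. Applying \eqref{oddLeibniz} for $B$ gives $B(\gamma_1\gamma_2)=B(\gamma_1)\gamma_2+(-1)^{|\gamma_1|}\gamma_1B(\gamma_2)$, and then applying \eqref{oddLeibniz} for $A$ to each summand produces four terms. The essential bookkeeping point is that the factor $B(\gamma_1)$ has degree $|\gamma_1|+1$, so the sign picked up when $A$ is pushed past it is $(-1)^{|\gamma_1|+1}$ rather than $(-1)^{|\gamma_1|}$; this single extra sign is what drives the cancellation below. Collecting, $AB(\gamma_1\gamma_2)$ equals $AB(\gamma_1)\gamma_2+\gamma_1AB(\gamma_2)$ plus the two cross terms $(-1)^{|\gamma_1|+1}B(\gamma_1)A(\gamma_2)$ and $(-1)^{|\gamma_1|}A(\gamma_1)B(\gamma_2)$.

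Next I would write down the analogous expansion for $BA(\gamma_1\gamma_2)$, obtained from the previous one simply by interchanging the roles of $A$ and $B$; its cross terms are then $(-1)^{|\gamma_1|+1}A(\gamma_1)B(\gamma_2)$ and $(-1)^{|\gamma_1|}B(\gamma_1)A(\gamma_2)$. Adding the two expansions, the diagonal terms combine to give exactly $D(\gamma_1)\gamma_2+\gamma_1D(\gamma_2)$, while in the sum of cross terms the coefficient of $B(\gamma_1)A(\gamma_2)$ is $(-1)^{|\gamma_1|+1}+(-1)^{|\gamma_1|}=0$ and likewise the coefficient of $A(\gamma_1)B(\gamma_2)$ vanishes. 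This yields \eqref{evenLeibniz}.

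The computation has no real obstacle beyond careful sign tracking; the only point requiring attention is the degree shift $|B(\gamma_1)|=|\gamma_1|+1$ noted above, since it is precisely the oddness of $A$ and $B$ (rather than evenness) that converts the graded Leibniz rule into an ungraded one for the anticommutator. Conceptually this is the standard fact that the graded commutator of two odd derivations of a graded-commutative algebra is an even derivation.
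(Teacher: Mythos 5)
Your proof is correct, and it is precisely the ``straight-forward calculation'' the paper leaves implicit: expand $A\circ B$ and $B\circ A$ on a product via the odd Leibniz rule \eqref{oddLeibniz}, use $|B(\gamma_1)|=|\gamma_1|+1$ for the sign, and observe that the cross terms cancel in the anticommutator. Nothing further is needed.
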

\begin{proof}  straight-forward calculations.
\end{proof}

\begin{thm}\label{t:cohom}  Given a regular almost Lie algebroid $(A,\rho,[\cdot,\cdot])$ together with the cochains $C^n$ defined in \eqref{cochains}, the map $\hat{J}$ and $\hat\delta$ defined in \eqref{deltahat} and \eqref{Jhat}, then
\[ \ud:= D+\hat{J}+\hat\delta  \label{ud}
\] maps $\ud\:C^n\to C^{n+1}$, fulfills the Leibniz rule \eqref{oddLeibniz} and squares to 0.
\end{thm}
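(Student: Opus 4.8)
The plan is to read $\ud=D+\hat\delta+\hat J$ as the bidegree decomposition of a homological vector field and to reduce $\ud^2=0$ to one identity per bidegree. Writing $(p,q)$ for the bidegree of an element of $\Gamma(\wedge^pA^*\otimes\Symm^qF^*)$, the three summands act with bidegrees $(+1,0)$, $(-1,+1)$ and $(+3,-1)$, so each raises the total degree $p+2q$ by one and hence so does $\ud\colon C^n\to C^{n+1}$. Since $D$, $\hat\delta$ and $\hat J$ are odd derivatives, as noted after each of their definitions, and a sum of odd derivatives is again one, $\ud$ satisfies the Leibniz rule \eqref{oddLeibniz}. This settles the first two assertions, and it remains to prove $\ud^2=0$.

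Expanding the square gives
\[ \ud^2 = D^2+\hat\delta^2+\hat J^2+[D,\hat\delta]+[D,\hat J]+[\hat\delta,\hat J], \]
with $[\,\cdot\,,\cdot\,]$ the anticommutator of Lemma~\ref{l:commut}. Since $\hat\delta^2=0=\hat J^2$, and since the surviving terms have pairwise distinct bidegrees except for $D^2$ and $[\hat\delta,\hat J]$, which both live in bidegree $(+2,0)$ — indeed $[D,\hat\delta]$ sits in $(0,+1)$ and $[D,\hat J]$ in $(+4,-1)$ — the vanishing of $\ud^2$ is equivalent to the three identities
\[ [D,\hat\delta]=0,\qquad [D,\hat J]=0,\qquad D^2+[\hat\delta,\hat J]=0. \label{three} \]
By Lemma~\ref{l:commut} each left-hand side, including $D^2=\tfrac12[D,D]$, is an ungraded derivation of the algebra $C^\bullet$. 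By Remark~\ref{r:gen} it therefore suffices to check \eqref{three} on the generators $f\in\smooth(M)=C^0$, $\alpha\in\Gamma(A^*)=C^1$ and $\beta\in\Gamma(F^*)\subset C^2$.

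On these generators the first two identities are short. For $[D,\hat\delta]$ one only uses $\rho|_F=0$ and the defining relation $t(\conn_\psi v)=[\psi,t(v)]$ of the connection: on $f$ it collapses to $t^*(\ud f)=0$, on $\alpha$ to the cancellation of $\langle D(t^*\alpha)(\psi),v\rangle_F$ against $(D\alpha)(v,\psi)$, and on $\beta$ to the inclusion $[\Gamma(F),\Gamma(F)]\subset\Gamma(F)$. The identity $[D,\hat J]=0$ is automatic on $C^0$ and $C^1$, where $\hat J$ vanishes for degree reasons, and on $\beta\in\Gamma(F^*)$ it becomes, after expanding $\langle\conn_\psi\beta,\,\cdot\,\rangle_F=\rho(\psi)\langle\beta,\,\cdot\,\rangle_F-\langle\beta,\conn_\psi\,\cdot\,\rangle_F$ and keeping track of the Koszul sign $(-1)^p$ that $\hat J$ carries as an odd derivative, precisely the identity $DJ=0$ established in Section~\ref{s:Jacob}.

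The remaining identity $D^2=-[\hat\delta,\hat J]$ is the heart of the matter: it says that the obstruction to $D^2=0$, which as recalled in Section~\ref{s:prelim} is exactly the Jacobiator, is undone by the cross term $[\hat\delta,\hat J]$. On $f$ both sides vanish. On $\alpha\in\Gamma(A^*)$ one has $[\hat\delta,\hat J]\alpha=\hat J(t^*\alpha)=\langle t^*\alpha,J(\cdot,\cdot,\cdot)\rangle_F$, the contraction of $\alpha$ with $J$, and this is to be matched with $-D^2\alpha$. On $\beta\in\Gamma(F^*)$ one identifies the curvature of $\nabla$ through
\[ t\big(\conn_\phi\conn_\psi v-\conn_\psi\conn_\phi v-\conn_{[\phi,\psi]}v\big)=J(\phi,\psi,t(v)), \label{curv} \]
so that $D^2\beta$ is once more a contraction of $\beta$ with $J$, cancelled by $\hat\delta\hat J\beta$. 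I expect the real difficulty to lie entirely in this last identity, and specifically in the signs: reconciling the $(-1)^p$ Koszul signs of $\hat\delta$ and $\hat J$ with the sign appearing in \eqref{curv} is exactly what forces the two contributions to cancel rather than reinforce, and it is the one place where a miscount would break the result.
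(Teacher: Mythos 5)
Your proposal is correct and is essentially the paper's own proof: you expand $\ud^2$ into the anticommutators $[D,\hat\delta]$, $[D,\hat{J}]$ and $D^2+[\hat\delta,\hat{J}]$, reduce each to the generators $f\in\smooth(M)$, $\alpha\in\Gamma(A^*)$, $\beta\in\Gamma(F^*)$ via Lemma~\ref{l:commut} and Remark~\ref{r:gen}, and close with the same two inputs the paper uses, namely $DJ=0$ from Section~\ref{s:Jacob} and the Jacobiator-as-curvature identity underlying Proposition~\ref{p:D^2} and Lemma~\ref{l:Jd}. The only presentational differences are that you justify the grouping by explicit bidegree bookkeeping and merge the paper's two separate formulas $D^2=-\tilde{J}-L^*$ and $\hat\delta\circ\hat{J}+\hat{J}\circ\hat\delta=\tilde{J}+L^*$ into the single identity $D^2=-[\hat\delta,\hat{J}]$, and that the paper settles the $\beta$-case of $[D,\hat\delta]=0$ more slickly via local surjectivity of $t^*$ together with $\hat\delta^2=0$, where your direct computation would in fact hinge on the symmetrization in $\Symm^2F^*$ killing $\conn_{t(v_1)}v_2+\conn_{t(v_2)}v_1$ by skew-symmetry of the bracket, not merely on $[\Gamma(F),\Gamma(F)]\subset\Gamma(F)$.
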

The proof will cover the remaining part of this section.

First note that the mapping behavior of $\ud\:C^n\to C^{n+1}$ and the Leibniz rule \eqref{oddLeibniz} follow from the corresponding behavior of the summands.  It remains therefore to check that
$$ \ud^2= D^2+(D\circ\hat\delta+\hat\delta\circ D)+(D\circ\hat{J}+\hat{J}\circ D)+(\hat\delta\circ\hat{J}+\hat{J}\circ\hat\delta)
$$ vanishes.  We will prove that in several steps.
\begin{lemma}  $D\circ\hat\delta +\hat\delta\circ D=0$
\end{lemma}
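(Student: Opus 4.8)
The plan is to observe that $D\circ\hat\delta+\hat\delta\circ D$ is the anticommutator of the two odd derivations $D$ and $\hat\delta$, so by Lemma~\ref{l:commut} it is an \emph{even} derivation, i.e.\ it obeys the ungraded Leibniz rule~\eqref{evenLeibniz}. An even derivation that vanishes on a generating set of the algebra vanishes on all of it; hence by Remark~\ref{r:gen} it is enough to verify $D\circ\hat\delta+\hat\delta\circ D=0$ on the three kinds of generators $f\in C^0=\smooth(M)$, $\alpha\in C^1=\Gamma(A^*)$, and $\beta\in\Gamma(F^*)\subset C^2$. On decomposables $\hat\delta$ lowers the number of $A^*$-factors by one, applies $t^*$ to the removed factor, and multiplies the result into the symmetric part; in particular on a single one-form $\hat\delta\alpha=t^*\alpha$, and $\hat\delta$ annihilates anything with no $A^*$-factor.

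For $f\in\smooth(M)$ one has $\hat\delta f=0$, while $Df\in\Omega_M^1(A)$ is $Df(\psi)=\rho(\psi)[f]$, so $\hat\delta(Df)=t^*(Df)$ sends $v\mapsto\rho(t(v))[f]=0$ because $t(v)\in\ker\rho$; thus the degree-zero case is immediate. The remaining two cases rest on one elementary identity that I would record first: for $\omega\in\Omega_M^2(A)$ the definition~\eqref{deltahat} gives $\<\hat\delta\omega(\psi),v\>_F=-\omega(\psi,t(v))$ for all $\psi\in\Gamma(A)$ and $v\in\Gamma(F)$ (checked on decomposables $\omega=\alpha_1\wedge\alpha_2$ and extended linearly).

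For $\alpha\in\Gamma(A^*)$ I would compute the two contributions separately. Since $\hat\delta\alpha=t^*\alpha\in\Gamma(F^*)$, the first term $D(t^*\alpha)\in\Omega_M^1(A,F^*)$ is governed by the induced connection on $F^*$, giving $\<D(t^*\alpha)(\psi),v\>_F=\rho(\psi)[\alpha(t v)]-\alpha([\psi,t v])$, where I used $t(\nabla_\psi v)=[\psi,t(v)]$. The second term uses the identity above: $\<\hat\delta(D\alpha)(\psi),v\>_F=-D\alpha(\psi,t v)$, and expanding $D\alpha(\psi,t v)$ with $\rho(t v)=0$ yields exactly $\rho(\psi)[\alpha(t v)]-\alpha([\psi,t v])$. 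The two contributions are therefore negatives of each other and cancel.

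For $\beta\in\Gamma(F^*)$ there is no $A^*$-factor, so $\hat\delta\beta=0$ and only $\hat\delta(D\beta)$ survives. Here $\hat\delta$ converts the single $A^*$-index of $D\beta\in\Omega_M^1(A,F^*)$ into an $F^*$-index and symmetrizes it against the $F^*$-index already present; evaluating on $v,w\in\Gamma(F)$ and again using $\rho(t v)=0$ turns each term into $-\beta$ of a covariant derivative along a kernel section, and the symmetrization produces $-\beta$ applied to $\nabla_{t v}w+\nabla_{t w}v$, whose image under $t$ is $[t v,t w]+[t w,t v]=0$ by skew-symmetry of the bracket. Hence $\hat\delta(D\beta)=0$. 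The main obstacle is the $C^1$ computation: one must set up the contraction identity $\<\hat\delta\omega(\psi),v\>_F=-\omega(\psi,t(v))$ correctly and track the interplay of $D$, the connection on $F^*$, and the vanishing $\rho\circ t=0$ so that the two terms cancel sign-exactly; the $C^0$ and $C^2$ cases are comparatively routine once skew-symmetry and $\rho\circ t=0$ are in hand.
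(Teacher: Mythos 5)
Your proof is correct, and its skeleton coincides with the paper's: the reduction via Lemma~\ref{l:commut} and Remark~\ref{r:gen} to the generators $f\in\smooth(M)$, $\alpha\in\Gamma(A^*)$, $\beta\in\Gamma(F^*)$, and the computations for $f$ and $\alpha$, are the same (your auxiliary identity $\<\hat\delta\omega(\psi),v\>_F=-\omega(\psi,t(v))$ is exactly, sign included, the step the paper records as $\<D\alpha,\psi\wedge t(v)\>=-\<\hat\delta(D\alpha),\psi\otimes v\>$). The one genuine divergence is the case $\beta\in\Gamma(F^*)$. The paper argues indirectly: since $t^*\:A^*\to F^*$ is surjective, locally $\beta=t^*\alpha=\hat\delta(\alpha)$, hence $D\beta=D\circ\hat\delta(\alpha)=-\hat\delta\circ D(\alpha)$ by the already-settled $C^1$ case, and then $\hat\delta(D\beta)=-\hat\delta^2(D\alpha)=0$. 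You instead compute head-on: evaluating $\hat\delta(D\beta)$ on $v,w\in\Gamma(F)$ and using $\rho\circ t=0$ yields $-\<\beta,\conn_{t(v)}w+\conn_{t(w)}v\>_F$, and since $t(\conn_{t(v)}w+\conn_{t(w)}v)=[t(v),t(w)]+[t(w),t(v)]=0$ with $t$ injective, this vanishes. Both are valid; the paper's route is slightly slicker because it recycles the $C^1$ computation together with $\hat\delta^2=0$, whereas yours trades that for one more explicit calculation but leans only on injectivity of the embedding $t$ and skew-symmetry of the bracket -- not on local surjectivity of $t^*$, a hypothesis that is automatic for a regular $F=\ker\rho$ but requires care in the singular setting of Remark~\ref{r:sing}. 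In that sense your variant is marginally more robust, and it also makes visible the geometric reason the term dies: the connection $\conn$ restricted to kernel directions is the bracket on $F$, whose symmetrization is zero.
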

\begin{proof}
  By Lemma~\ref{l:commut} and Remark~\ref{r:gen} it is sufficient to check the claim for $f\in\smooth(M)$, $\alpha\in\Gamma(A^*)$ and $\beta\in\Gamma(F^*)$.  Note that $\hat\delta(f)=0$ according to its definition.  Let in addition $v\in\Gamma(F)$
\begin{align*}
  \<\hat\delta(Df),v\>_F &= \<Df,t(v)\> = \rho(t(v))[f]=0
\intertext{for $\alpha\in\Gamma(E^*)$ let further $\psi\in\Gamma(E)$}
 \<D(\hat\delta(\alpha)),\psi\otimes v\> &=\<\<D\circ\hat\delta(\alpha),\psi\>,v\>_F =\<\conn_\psi\hat\delta(\alpha),v\>_F \\
  &= \rho(\psi)\<\hat\delta(\alpha),v\>_F -\<\hat\delta(\alpha),\conn_\psi v\>_F  \\
  &= \rho(\psi)\<\alpha,t(v)\> -\rho(t(v))\<\alpha,\psi\> -\<\alpha,[\psi,t(v)]\>  \\
  &= \<D\alpha, \psi\wedge t(v)\> = -\<\hat\delta(D\alpha), \psi\otimes v\>
\end{align*}
For $\beta\in\Gamma(F^*)$ we have $\hat\delta(\beta)=0$ by definition.  Moreover $\hat\delta\circ D(\beta)(x)$ for $x\in M$ depends only on $\beta$ in a neighborhood of $x$.  But since $t^*\:E^*\to F^*$ is surjective we have an $\alpha\in\Gamma(A^*)$ over this neighborhood with $\beta=t^*\alpha=\hat\delta(\alpha)$.  Therefore $D\beta=D\circ\hat\delta(\alpha)=-\hat\delta\circ D\alpha$ by the previous computation and so $\hat\delta(D(\beta))=0$, because $\hat\delta^2=0$.  This completes the proof.
\end{proof}

\begin{lemma} $D\circ\hat{J} +\hat{J}\circ D=0$
\end{lemma}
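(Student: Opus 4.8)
The plan is to exploit the algebraic machinery already set up, reducing the identity $D\circ\hat{J}+\hat{J}\circ D=0$ to a check on generators. By Lemma~\ref{l:commut}, both $D$ and $\hat{J}$ are odd derivations, so their anticommutator $[D,\hat{J}]$ is an (even) derivation satisfying the ungraded Leibniz rule \eqref{evenLeibniz}. Since $C^\bullet$ is generated as an algebra by $\smooth(M)$, $\Gamma(A^*)$, and $\Gamma(F^*)$ (Remark~\ref{r:gen}), it suffices to verify that $[D,\hat{J}]$ annihilates each type of generator. On $f\in\smooth(M)=C^0$ this is immediate, since $\hat{J}(f)=0$ by definition (there is no $F^*$-factor to contract $J$ against) and $\hat{J}(Df)=0$ as $Df\in\Gamma(A^*)$ again carries no $F^*$-factor, so both terms vanish.

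Next I would handle $\alpha\in\Gamma(A^*)\subset C^1$. Here $\hat{J}(\alpha)=0$ for the same degree reason, so the claim collapses to $\hat{J}(D\alpha)=0$; but $D\alpha\in\Gamma(\wedge^2A^*)$ still has no $F^*$-component, so $\hat{J}$ kills it and this case is trivial as well. The substantive case is $\beta\in\Gamma(F^*)\subset C^2$, where $\hat{J}(\beta)=\<\beta,J(\cdot,\cdot,\cdot)\>_F\in\Omega^3_M(A)$ is genuinely nonzero. Here I would compute $D(\hat{J}(\beta))=D\<\beta,J\>_F$ and $\hat{J}(D\beta)$ separately and show they cancel. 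The first term is the exterior covariant derivative applied to the $A$-valued 3-form obtained by contracting the $F^*$-valued connection $\nabla$ on $\beta$ against $J$; the second involves $D\beta$, which by the extension of $\nabla$ to $F^*$ produces terms of the form $\alpha'\otimes\nabla_{\cdot}\beta$ that $\hat{J}$ then pairs with $J$.

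The key input — and the step I expect to be the crux — is the identity $DJ=0$ established in the Theorem of Section~\ref{s:Jacob}. Morally, $D\<\beta,J\>_F$ expands by the Leibniz rule for the pairing into a piece involving $\<\nabla\beta,J\>_F$ (which is exactly what $\hat{J}(D\beta)$ contributes, up to sign) plus a piece involving $\<\beta,DJ\>_F$, and the latter vanishes precisely because the Jacobiator is $D$-closed. Thus the whole computation is engineered so that the two surviving contributions are the $\<\nabla\beta,J\>_F$ terms from each side, which appear with opposite signs and cancel. The main obstacle will be bookkeeping the signs and the compatibility of the connection $\nabla$ on $F^*$ with the pairing $\<\cdot,\cdot\>_F$, together with verifying that the combinatorial factors match when $D$ is applied to a product $\alpha\wedge\<\beta,J\>_F$; I would set this up by writing $\hat{J}(\beta)=\<\beta,J\>_F$ explicitly as an $A^*$-valued form, applying the defining formula \eqref{D} for $D$ on $\Omega^\bullet_M(A)$, and collecting terms against the definition \eqref{Jhat} of $\hat{J}$ acting on $D\beta$, invoking $DJ=0$ at the end.
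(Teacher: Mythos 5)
Your proposal is correct and follows essentially the same route as the paper: reduce to the generators $f\in\smooth(M)$, $\alpha\in\Gamma(A^*)$, $\beta\in\Gamma(F^*)$ via Lemma~\ref{l:commut} and Remark~\ref{r:gen}, dispose of the $f$ and $\alpha$ cases by the degree reasons you give, and settle the $\beta$ case by expanding $D\<\beta,J\>_F$ with the Leibniz rule for the pairing and invoking $DJ=0$ from Section~\ref{s:Jacob} so that only the mutually cancelling $\<D\beta\wedge J\>_F$-type terms survive. The only quibble is cosmetic: $\<\beta,J\>_F$ is a scalar-valued form in $\Omega_M^3(A)$, not $A$-valued as you describe it.
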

\begin{proof}  Again by Lemma~\ref{l:commut} and Remark~\ref{r:gen} it is sufficient to check the claim for $f\in\smooth(M)$, $\alpha\in\Gamma(E^*)$ and $\beta\in\Gamma(F^*)$.  Note that $Df\in\Gamma(E^*)$ and therefore $\hat{J}(Df)=0$ as well as $\hat{J}(f)=0$.  Analogously $\hat{J}(\alpha)=0=\hat{J}(D\alpha)$.  It remains to observe
 $$ D(\hat{J}(\beta)) = D\<\beta,J\>_F = \<D\beta\wedge J\>_F +\<\beta, DJ\>_F = -\hat{J}(D\beta).
 $$
\end{proof}

Now we need an expression for $D^2$.  We introduce therefore the following two maps.
\begin{defn}  We define the map
 \begin{align} L\:\wedge^pA^*\otimes \Symm^qF &\to\wedge^{p+2}A\otimes \Symm^q F,\\
   J(\alpha\otimes v_1v_2) &= \alpha\wedge J(v_1,\cdot,\cdot) v_2 +\alpha\wedge J(v_2,\cdot,\cdot) v_1 \nonumber
 \end{align} for $\alpha\in\wedge^p A^*$ and $v_1,v_2\in F$ all over the same base point $x\in M$.  $L$ extends linearly to arbitrary elements of the tensor product and correspondingly to higher $q$.

 We define its partial transpose as
 \[ L^*\:C^n\to C^{n+2},\quad L^*(\alpha\otimes\beta)= \alpha\wedge L^*(\beta)
 \] for $\alpha\in\wedge^p A^*$, $\beta\in \Symm^\bullet F^*$ all over the same base point $x\in M$.
  For $\beta_1\in F^*$ and $v\in F$ also over $x$, $L^*$ is defined as
 \[\<L^*(\beta_1),v\>_F:=\<\beta_1,L(v)\>_F\in\wedge^2 A^* \]
 and extends to higher powers using an ungraded Leibniz rule.
\end{defn}
Note that $L^*$ also maps $C^n$ to $C^{n+2}$ and fulfills the ungraded Leibniz rule \eqref{evenLeibniz}.

\begin{defn} We define the map $\tilde{J}\:C^n\to C^{n+2}$ as
\[ \tilde{J}(\alpha_1\wedge\alpha_2\otimes\beta) = \alpha_1(t(J(\cdot,\cdot,\cdot)))\wedge\alpha_2\otimes\beta -\alpha_2(t(J(\cdot,\cdot,\cdot))\wedge\alpha_1\otimes\beta
\] for $\beta\in\Gamma(\Symm^\bullet F^*)$ and $\alpha_i\in\Gamma(E^*)$ and extended accordingly to higher powers in $\wedge^\bullet E^*$.
\end{defn}
Note that $\tilde{J}$ fulfills the ungraded Leibniz rule \eqref{evenLeibniz}.

\begin{prop}\label{p:D^2}
 \[ D^2= -\tilde{J} -L^*
 \]
\end{prop}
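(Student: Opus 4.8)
The plan is to exploit that $D^2$, $\tilde{J}$ and $L^*$ all obey the \emph{ungraded} Leibniz rule \eqref{evenLeibniz}, so that by Remark~\ref{r:gen} it suffices to verify the asserted identity on the algebra generators $f\in\smooth(M)=C^0$, $\alpha\in\Gamma(A^*)=C^1$ and $\beta\in\Gamma(F^*)\subset C^2$. For $D^2$ this follows from Lemma~\ref{l:commut}, since $D^2=\tfrac12[D,D]$ is then an even derivation; for $\tilde{J}$ and $L^*$ it was already recorded after their definitions. Both sides of $D^2=-\tilde{J}-L^*$ being even derivations and $C^\bullet$ being generated as an algebra, agreement on generators forces agreement everywhere.

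First I would dispatch the two easy generators. On $f\in\smooth(M)$ one has $Df=\rho(\cdot)[f]\in\Gamma(A^*)$, and the expansion $D^2f(\psi_0,\psi_1)=\rho(\psi_0)\rho(\psi_1)[f]-\rho(\psi_1)\rho(\psi_0)[f]-\rho([\psi_0,\psi_1])[f]$ vanishes by the bracket-morphism property \eqref{morph}; moreover $\tilde{J}(f)=L^*(f)=0$ for degree reasons. On $\alpha\in\Gamma(A^*)$ I would run the standard ``$D^2=$ Jacobiator'' computation: expanding $D^2\alpha(\psi_0,\psi_1,\psi_2)$ through \eqref{D}, the double-$\rho$ terms cancel the $\rho$-of-bracket terms by \eqref{morph}, the single-$\rho$-of-bracket terms cancel in pairs, and the terms ``$\alpha$ of an iterated bracket'' reassemble, using skew-symmetry of $[\cdot,\cdot]$, into $-\alpha(J(\psi_0,\psi_1,\psi_2))$. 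Since the two-factor formula for $\tilde{J}$ is precisely the ungraded-derivation extension of the single-factor assignment $\alpha\mapsto\alpha(t(J(\cdot,\cdot,\cdot)))$, this reads exactly $D^2\alpha=-\tilde{J}(\alpha)$, while $L^*(\alpha)=0$.

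The heart of the argument is the generator $\beta\in\Gamma(F^*)$. Here $D\beta=\nabla_{(\cdot)}\beta$, so that $D^2\beta(\psi_0,\psi_1)=\nabla_{\psi_0}\nabla_{\psi_1}\beta-\nabla_{\psi_1}\nabla_{\psi_0}\beta-\nabla_{[\psi_0,\psi_1]}\beta$ is nothing but the curvature of the dual $A$-connection on $F^*$. I would compute the curvature on $F$ first: applying the injection $t$ and the defining relation $t(\nabla_\phi v)=[\phi,t(v)]$ gives $t\big(\nabla_{\psi_0}\nabla_{\psi_1}v-\nabla_{\psi_1}\nabla_{\psi_0}v-\nabla_{[\psi_0,\psi_1]}v\big)=[\psi_0,[\psi_1,t(v)]]-[\psi_1,[\psi_0,t(v)]]-[[\psi_0,\psi_1],t(v)]$, which is exactly $J(\psi_0,\psi_1,t(v))$. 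Dualizing (the curvature on $F^*$ is minus the transpose of that on $F$) yields $\langle D^2\beta(\psi_0,\psi_1),v\rangle_F=-\langle\beta,J(\psi_0,\psi_1,t(v))\rangle_F$. On the other side $\tilde{J}(\beta)=0$, and unwinding the definitions of $L$ and $L^*$ at symmetric degree one gives $\langle -L^*(\beta)(\psi_0,\psi_1),v\rangle_F=-\langle\beta,J(v,\psi_0,\psi_1)\rangle_F$. The two agree because the Jacobiator is totally skew-symmetric, so that the cyclic (even) permutation yields $J(v,\psi_0,\psi_1)=J(\psi_0,\psi_1,t(v))$; hence $D^2\beta=-L^*(\beta)=-\tilde{J}(\beta)-L^*(\beta)$, completing the verification on all generators.

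I expect the main obstacle to be precisely this last case: keeping the signs straight through the dualization of the connection and through the contraction conventions hidden in $L$ and $L^*$, and recognizing that it is the even permutation relating $J(v,\psi_0,\psi_1)$ to $J(\psi_0,\psi_1,t(v))$ that reconciles the curvature formula with $L^*$. A secondary bookkeeping point is to confirm that the two-factor defining formula for $\tilde{J}$ really is the derivation extension of $\alpha\mapsto\alpha(t(J))$, so that the $\alpha$-computation lands on $-\tilde{J}$ exactly and not on a rescaling of it.
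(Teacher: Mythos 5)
Your proposal is correct and follows essentially the same route as the paper: reduction to the generators $f$, $\alpha$, $\beta$ via Lemma~\ref{l:commut} and Remark~\ref{r:gen}, the morphism property \eqref{morph} for $f$, the standard $D^2\alpha=-\alpha(t(J))=-\tilde{J}(\alpha)$ computation, and the identification of $D^2$ on the $F$-side with the curvature $[\conn_{\psi_0},\conn_{\psi_1}]-\conn_{[\psi_0,\psi_1]}=J(t(\cdot),\psi_0,\psi_1)=L$. Your only cosmetic deviation is in the $\beta$-step, where you invoke the standard fact that the dual curvature is minus the transpose, whereas the paper derives the same identity inline by differentiating the pairing $\<D\beta,v\>_F$ and reusing the $D^2f=0$ and $D^2v=L(v)$ computations --- the content is identical, including the cyclic-permutation reconciliation of $J(t(v),\psi_0,\psi_1)$ with $J(\psi_0,\psi_1,t(v))$.
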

\begin{proof}  Again by Lemma~\ref{l:commut} and Remark~\ref{r:gen} it is sufficient to verify the formula for $f\in\smooth(M)$, $\alpha\in\Gamma(E^*)$ and $\beta\in\Gamma(F^*)$.  Let $\psi_i\in\Gamma(E)$ and $v\in\Gamma(F)$
\begin{align*}
  \<D^2f,\psi_1\wedge\psi_2\> =\,& \rho(\psi_1)\<Df,\psi_2\> -\rho(\psi_2)\<Df,\psi_1\> -\<Df,[\psi_1,\psi_2]\>  \\
  =& [\rho(\psi_1),\rho(\psi_2)][f] -\rho[\psi_1,\psi_2][f]
\intertext{and the latter expression vanishes due to axiom \eqref{morph}.  On the other hand}
 -\tilde{J}(f)-L^*(f) &=0\\
 \<D^2\alpha,\psi_1,\psi_2,\psi_3\> =\,& \rho(\psi_1)\<D\alpha,\psi_2,\psi_3\> -\<D\alpha,[\psi_1,\psi_2],\psi_3\> +\cp  \\
  =& [\rho(\psi_1),\rho(\psi_2)]\<\alpha,\psi_3\> -\rho(\psi_1)\<\alpha,[\psi_2,\psi_3]\> -\dots  \\
  &\dots-\rho[\psi_1,\psi_2]\<\alpha,\psi_3\> +\rho(\psi_1)\<\alpha,[\psi_2,\psi_3]\> +\dots \\
  &\dots+\<\alpha,[[\psi_1,\psi_2],\psi_3]\> +\cp  \\
  =& \<\alpha,-J(\psi_1,\psi_2,\psi_3)\> = \<-\tilde{J}(\alpha),\psi_1\wedge\psi_2\wedge\psi_3\>
\intertext{Note that on the other hand}
  L^*(\alpha) &= 0.  \\
  \<D^2v, \psi_1\wedge\psi_2\> &= \conn_{\psi_1}\<Dv,\psi_2\> -\conn_{\psi_2}\<Dv,\psi_1\> -\<Dv,[\psi_1,\psi_2]\>  \\
  =& [\conn_{\psi_1},\conn_{\psi_2}]v -\conn_{[\psi_1,\psi_2]}v  \\
  =& [\psi_1,[\psi_2,t(v)]] -[\psi_2,[\psi_1,t(v)]] -[[\psi_1,\psi_2],t(v)] \\
  =& J(t(v),\psi_1,\psi_2) = \<L(v),\psi_1\wedge\psi_2\>
\intertext{and therefore for $\beta\in\Gamma(F^*)$}
  \<D^2\beta,v\>_F &= D\<D\beta,v\>_F +\<D\beta, Dv\>_F \\
  =& D^2\<\beta,v\>_F -D\<\beta,D v\>_F +D\<\beta,Dv\>_F -\<\beta,D^2v\>_F  \\
\intertext{The first term vanishes due to the first step of the proof and the fourth term is exactly}
 =& -\<\beta, L(v)\>_F=\<-L^*(\beta),v\>_F.
\end{align*}
\end{proof}

The remaining part of the proof is
\begin{lemma}\label{l:Jd} $\hat\delta\circ\hat{J}+\hat{J}\circ\hat\delta = \tilde{J}+L^*$
\end{lemma}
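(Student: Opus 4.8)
The plan is to exploit that \emph{both} sides of the asserted identity are even derivations and to reduce the verification to the algebra generators. Indeed, by Lemma~\ref{l:commut} the anticommutator $\hat\delta\circ\hat{J}+\hat{J}\circ\hat\delta$ of the two odd derivations $\hat\delta$ and $\hat{J}$ satisfies the ungraded Leibniz rule \eqref{evenLeibniz}, while by construction $\tilde{J}$ and $L^*$ each satisfy it as well. Since, by Remark~\ref{r:gen}, the algebra $C^\bullet$ is generated by $\smooth(M)$, $\Gamma(A^*)$ and $\Gamma(F^*)$, and two even derivations that agree on a generating set agree everywhere, it suffices to check $\hat\delta\circ\hat{J}+\hat{J}\circ\hat\delta=\tilde{J}+L^*$ on $f\in\smooth(M)$, $\alpha\in\Gamma(A^*)$ and $\beta\in\Gamma(F^*)$.

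The first two generators are quickly dispatched. For $f\in\smooth(M)$ all four operators vanish: $\hat{J}$ and $L^*$ need a $\Gamma(F^*)$ factor, $\hat\delta$ annihilates $f$, and $\tilde{J}$ needs an $A^*$ factor, so both sides are $0$. For $\alpha\in\Gamma(A^*)$ I would use $\hat{J}(\alpha)=0$ and $\hat\delta(\alpha)=t^*\alpha\in\Gamma(F^*)$, so the left side collapses to $\hat{J}(t^*\alpha)=\<t^*\alpha,J(\cdot,\cdot,\cdot)\>_F=\alpha(t(J(\cdot,\cdot,\cdot)))$, which is exactly $\tilde{J}(\alpha)$, whereas $L^*(\alpha)=0$; the two sides agree.

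The genuine content is the case $\beta\in\Gamma(F^*)$, and this is where the real work lies. Since $\hat\delta(\beta)=0$, the left side reduces to $\hat\delta\bigl(\hat{J}(\beta)\bigr)=\hat\delta\bigl(\<\beta,J(\cdot,\cdot,\cdot)\>_F\bigr)$, that is, to applying $\hat\delta$ to the $3$-form $\omega:=\<\beta,J\>_F\in\Omega_M^3(A)$. The key step is to derive a closed form for $\hat\delta$ on an exterior form from its derivation property together with $\hat\delta(\gamma)=t^*\gamma$ on $\Gamma(A^*)$: one obtains, for $u,v\in A$ and $w\in F$ over a common base point, the formula $\<\hat\delta\omega,\,u\wedge v\otimes w\>=\omega(t(w),u,v)$. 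Substituting $\omega=\<\beta,J\>_F$ gives $\<\hat\delta\hat{J}(\beta),\,u\wedge v\otimes w\>=\<\beta,J(t(w),u,v)\>_F$. On the right side $\tilde{J}(\beta)=0$ (no $A^*$ factor), while the definitions of $L$ and $L^*$ yield $\<L^*(\beta),\,u\wedge v\otimes w\>=\<\beta,L(w)\>_F(u,v)=\<\beta,J(t(w),u,v)\>_F$, using $L(w)=J(t(w),\cdot,\cdot)$ as in Proposition~\ref{p:D^2}. The two expressions coincide, which completes the check on generators and hence the lemma.

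The main obstacle is the bookkeeping in this last case: deriving the closed formula $\<\hat\delta\omega,\,u\wedge v\otimes w\>=\omega(t(w),u,v)$ from the odd-derivation property — in particular keeping track of the signs produced when the even elements $t^*\alpha_i\in\Symm^1 F^*$ are commuted past the remaining $1$-forms — and then matching it slot-for-slot with $L^*$ via the convention, inherited from Proposition~\ref{p:D^2}, that $L$ inserts $t(\cdot)$ into the \emph{first} argument of $J$. Once these sign and slot conventions are pinned down, the remaining manipulations are routine.
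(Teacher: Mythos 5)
Your proof is correct and takes essentially the same route as the paper's: reduce via Lemma~\ref{l:commut} and Remark~\ref{r:gen} to the generators $f\in\smooth(M)$, $\alpha\in\Gamma(A^*)$, $\beta\in\Gamma(F^*)$, dispatch the first two trivially (with $\hat{J}(\hat\delta(\alpha))=\tilde{J}(\alpha)$), and carry the real content in the $\beta$-case via $\hat\delta(\hat{J}(\beta))=L^*(\beta)$. Your closed formula $\<\hat\delta\omega,u\wedge v\otimes w\>=\omega(t(w),u,v)$ matches the paper's $\<\beta,J(\psi_1,\psi_2,t(v))\>_F$ because moving $t(w)$ from the last to the first slot of the skew-symmetric $J$ is an even (cyclic) permutation, so the slot convention you inherit from Proposition~\ref{p:D^2} introduces no sign discrepancy.
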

\begin{proof}  Analogous to the previous lemmas we only need to check for $f\in\smooth(M)$, $\alpha\in\Gamma(E^*)$ and $\beta\in\Gamma(F^*)$.  Let further $\psi_i\in\Gamma(E)$ and $v\in\Gamma(F)$, then
\begin{align*}
  \hat{J}(f) &=0=\hat\delta(f)=\tilde{J}(f)=L^*(f).  \\
  \hat{J}(\alpha) &=0=L^*(\alpha)  \\
  \hat{J}(\hat\delta(\alpha)) &= \alpha(t(J(\cdot,\cdot,\cdot))= \tilde{J}(\alpha)
\intertext{This proves the statement for $\alpha\in\Gamma(E^*)$.}
  \hat\delta(\beta) &= 0 \\
  \hat{J}(\beta) &= \<\beta,J(\cdot,\cdot,\cdot)\>_F  \\
  \<\hat\delta(\hat{J}(\beta)),\psi_1\wedge\psi_2\otimes v\> &= \<\beta,J(\psi_1,\psi_2,t(v))\>_F
    = \<\beta,L(v,\psi_1,\psi_2)\>_F \\
    &= \<L^*(\beta),\psi_1\wedge\psi_2\otimes v\>
\end{align*}
\end{proof}
This completes the proof of the Theorem.

\subsection{The super picture}\label{ss:super}
For this subsection we assume that the reader is familiar with the concept of $\N$-manifolds.  For references see e.g.\ \cite[chap.\ 2]{Royt02}, \cite[sect.\ 2]{Sev01b} or \cite[sec.\ 4]{Vor01}.  For convenience we have added a small summary in the Appendix~\ref{a:Nmfd}.  We will moreover assume that $\rho$ is a regular map, i.e.\ $F:=\ker\rho\subset A$ a vector bundle (of constant rank).

Given the $\N$-manifold $\M:=A[1]\oplus F[2]$, then $\gsmooth(\M)\cong C^\bullet$ via $l''$, the above cochain complex.  With the Q-structure found in \cite{Gru10}, we have a differential on $\gsmooth(\M)$.  In coordinates $x^i$ on a chart of $M$, $\xi^a$ odd fiber coordinates on $A[1]$ over the same chart and $b^B$ fiber coordinates of $F[2]$ of degree 2, $Q$ reads as follows
\[\begin{split}  Q= \rho^i_a(x)\xi^a\pfrac{}{x^i} -\tfrac12C_{ab}^c(x)\xi^a\xi^b\pfrac{}{\xi^c} +t^a_B(x)b^B\pfrac{}{\xi^a} \\
  -\Gamma_{aB}^C(x)\xi^ab^B\pfrac{}{b^C}  -\tfrac16J_{abc}^B(x)\xi^a\xi^b\xi^c\pfrac{}{b^B}
\end{split}
\]  where for the local frame $\{e_a: a=1,\dots,\rk E\}$ of $E$ dual to the coordinates $\xi^a$, and coordinate vector fields $\{\partial/\partial x^i: i=1,\dots,\dim M\}$ the anchor map encodes as $\rho(e_a)=\rho^i_a(x)\partial/\partial x^i$.  The structure functions of the bracket are $[e_a,e_b]=C_{ab}^c(x)e_c$, the coefficients of the connection $\nabla$ are $\conn_{e_a}e_B=\Gamma_{aB}^C(x) e_C$ for a frame $\{e_B:B=1,\dots,\rk F\}$ of the second vector bundle $F\subset E$.  The vector bundle morphism $t\:E\to F$ is encoded via $t(e_a)=t_a^B(x) e_B$.  The Jacobiator finally is $J(e_a,e_b,e_c)=J_{abc}^B(x) e_B$.

As it was shown in \cite{Gru10}\nocite{GSr10}, $DJ=0$ together with the other axioms of almost Lie algebroids imply that $[Q,Q]=0$.  This $Q$-structure is what gives the differential in our cohomology, i.e.\
\[  Q[l''(\gamma)] = l''(\ud\gamma)
\] for $\gamma\in C^\bullet$ and $\ud=D+\hat\delta+\hat{J}$ as defined above.  Note that in this picture the maps $J$, $\tilde{J}$, $\hat{J}$, $L$ and $L^*$ are all represented by the same coefficients $(J_{abc}^B)$.  Analogously $t$ and $\hat\delta$ are both represented by $(t_a^B)$.

\section{Examples}\label{s:ex}
Given an almost Lie algebroid $A$ and the definition of the differential $\ud$ from Theorem~\ref{t:cohom}, we can also define a cohomology $H^\bullet(A,\ud)$ in the usual way.  In this section we want to investigate to what it computes.  Throughout these computations let either $\rho$ be a regular map (and thus $F:=\ker\rho\subset A$ a vector bundle) or at least the base field $\k$ be of characteristic 0.\footnote{This asserts us that all sequences of modules split.}

\begin{lemma}\label{l:deltaCohom}  Let $V$ be a vector space endowed with bracket $[\cdot,\cdot]=0$ and anchor map $\rho=0$, then $\ud=\hat\delta$ has cohomology $H^0(V,\hat\delta)=\R$ and $H^{>0}(V,\hat\delta)=0$.
\end{lemma}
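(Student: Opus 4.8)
The plan is to make the abstract complex $C^\bullet$ completely explicit in this special case and then compute its cohomology directly. Since $V$ is a vector space over a point (an almost Lie algebra in the sense of Example~\ref{x:aLiealg}), we have $M=\pt$, so $\smooth(M)=\R$, the anchor vanishes, and hence $F=\ker\rho = V$ and $t = \id_V$. Consequently $E^*=V^*=F^*$, and the cochain space from \eqref{cochains} becomes $C^n=\bigoplus_{p+2q=n}\wedge^p V^*\otimes\Symm^q V^*$. Because $\rho=0$ the operator $D$ vanishes identically on $C^\bullet$ (every term in \eqref{D} carries a factor $\rho(\psi_i)$ or a bracket, and although $[\cdot,\cdot]=0$ kills the bracket terms too), and similarly $\hat J=0$ since $J=0$ for a trivial bracket. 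Thus $\ud=\hat\delta$, as claimed in the statement.

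\textbf{Identifying $\hat\delta$ with a Koszul-type differential.}
First I would observe that under $t=\id$ the map $t^*\:V^*\to V^*$ is the identity, so formula \eqref{deltahat} reads $\hat\delta(\alpha_1\wedge\alpha_2\otimes\beta)=\alpha_2\otimes\alpha_1\beta-\alpha_1\otimes\alpha_2\beta$ and its generating behavior is: on a single $\alpha\in V^*\subset C^1$ one has $\hat\delta(\alpha)=\alpha\in F^*\subset C^2$ (viewing the same covector now as a degree-2 symmetric element), while $\hat\delta$ vanishes on $\smooth(M)=\R$ and on $\Symm^\bullet V^*\subset C^{\text{even}}$. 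The key structural point is that $C^\bullet=\wedge^\bullet V^*\otimes\Symm^\bullet V^*$ is the Chevalley--Koszul algebra of $V^*$, and $\hat\delta$ is precisely the Koszul differential sending an exterior generator to the corresponding symmetric generator. This is the classical Koszul complex, whose acyclicity away from degree $0$ is well known.

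\textbf{Computing the cohomology.}
I would then conclude by the standard Koszul resolution argument. The algebra $C^\bullet$ is a tensor product over a basis $\{\alpha_1,\dots,\alpha_m\}$ of $V^*$ of the two-generator pieces $\wedge^\bullet(\alpha_i)\otimes\Symm^\bullet(\alpha_i)$, and $\hat\delta$ respects this tensor decomposition as an odd derivation; by a Künneth-type argument (legitimate since we are over a field, or by the splitting footnote in the regular case) the cohomology is the tensor product of the cohomologies of the one-variable factors. For each single generator the complex is $\R \xrightarrow{0} \langle\alpha_i\rangle \xrightarrow{\cong}\langle\alpha_i\rangle\to\cdots$, i.e.\ the exterior generator maps isomorphically onto its symmetric counterpart, so each factor is acyclic except in degree $0$ where the cohomology is $\R$. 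Taking the tensor product yields $H^0=\R$ and $H^{>0}=0$, which is the assertion.

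\textbf{Main obstacle.}
The only genuine subtlety, and where I would spend care, is verifying that $\hat\delta$ really is the Koszul differential rather than merely resembling it — in particular checking the signs and the grading (degree $1$ in $\wedge$ costs $1$, degree $1$ in $\Symm$ costs $2$) so that the contracting homotopy has the right degree $-1$ and that $\hat\delta$ together with the Euler-type counting operator gives the expected splitting. Concretely, I expect to exhibit an explicit homotopy $h$ (the adjoint operation moving a symmetric generator back to an exterior one) with $\hat\delta h + h\hat\delta = N$ equal to a degree operator that is invertible in positive degrees, which forces $H^{>0}=0$; the degree-$0$ computation $H^0=\R$ is immediate since $C^0=\R$ and $\hat\delta$ vanishes there.
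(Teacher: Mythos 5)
Your proof is correct, and since the paper offers no argument here beyond the remark ``This is an easy exercise in graded algebra,'' your write-up is precisely the intended exercise spelled out: with $t=\id$ the derivation $\hat\delta$ sends each exterior generator of $\wedge^\bullet V^*\otimes\Symm^\bullet V^*$ to its symmetric counterpart, which is the classical Koszul differential. Either of your two closing arguments works over $\R$ --- the basis-wise factorization into acyclic one-generator complexes with a K\"unneth argument, or the contracting homotopy $h$ with $\hat\delta h+h\hat\delta=N$ the generator-counting operator, invertible in positive degree --- and your cautionary check that $\hat\delta(\alpha)=t^*\alpha$ for $\alpha\in C^1$ is confirmed by the paper itself, which uses $\beta=t^*\alpha=\hat\delta(\alpha)$ in the proof that $D\circ\hat\delta+\hat\delta\circ D=0$.
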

This is an easy exercise in graded algebra.

Remember the notion of an almost Lie algebra as in Example~\subref{x:1}{x:aLiealg}.
 
\begin{prop}  Given an almost Lie algebra $\g$, then the cohomology from Section~\ref{s:cohom} is $H^0(\g,\ud)=\R$ and $H^{>0}(\g,\ud)=0$.
\end{prop}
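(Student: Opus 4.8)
The plan is to reduce everything to Lemma~\ref{l:deltaCohom} by means of a filtration spectral sequence in which $\hat\delta$ is the differential on the first page. Over a point the anchor vanishes, so $F=\ker\rho=\g=A$ and the embedding $t$ is the identity; consequently $\hat\delta$, $D$ and $\hat{J}$ are assembled out of $t^*=\id$, the bracket, and the Jacobiator $J$, and the cochain groups are $C^n=\bigoplus_{p+2q=n}\wedge^p\g^*\otimes\Symm^q\g^*$, which are finite-dimensional in each degree. The bracket (and hence $D$, $\hat{J}$) need not vanish, so I cannot apply the lemma directly; instead I want to treat $D+\hat{J}$ as a filtration-increasing perturbation of $\hat\delta$.

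First I would record the action of the three summands of $\ud$ on the bigrading $(p,q)$ (wedge degree, symmetric degree): $D$ sends $(p,q)\mapsto(p+1,q)$, $\hat{J}$ sends $(p,q)\mapsto(p+3,q-1)$, and $\hat\delta$ sends $(p,q)\mapsto(p-1,q+1)$; each raises the total degree $n=p+2q$ by one, as it must. The key observation is that the weight $w:=p+q$ is preserved by $\hat\delta$, raised by one by $D$, and raised by two by $\hat{J}$. Hence no summand of $\ud$ lowers $w$, so the decreasing filtration $F^sC^\bullet:=\bigoplus_{p+q\geq s}\wedge^p\g^*\otimes\Symm^q\g^*$ is preserved by $\ud$. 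This filtration is bounded in each total degree, since for fixed $n$ we have $q\in\{0,\dots,\lfloor n/2\rfloor\}$ and $w=n-q$ runs over finitely many values; thus the spectral sequence converges. The differential induced on the associated graded $F^s/F^{s+1}$ is exactly the $w$-preserving part of $\ud$, namely $\hat\delta$.

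Therefore the $E_1$-page is $H^\bullet(C^\bullet,\hat\delta)$, which (because $\hat\delta$ respects the $w$-grading, the complex splits, and $\bigoplus_s E_1^s=H^\bullet(C^\bullet,\hat\delta)$) is precisely the cohomology computed in Lemma~\ref{l:deltaCohom}: there the vanishing bracket forces $D=0$ and $J=0$, so $\ud=\hat\delta$, giving $\R$ concentrated in the single bidegree $p=q=0$ and zero otherwise. Since $E_1$ sits in one spot (total degree $0$), all higher differentials vanish, the sequence degenerates at $E_1$, and the abutment yields $H^0(\g,\ud)=\R$ and $H^{>0}(\g,\ud)=0$. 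As a sanity check on $H^0$: on $C^0=\R$ all three operators vanish ($\hat\delta(f)=0$, $\hat{J}(f)=0$, and $Df=0$ since $\rho=0$), so $\ker(\ud\colon C^0\to C^1)=\R$, matching the claim.

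The only point needing care---the mild obstacle---is the bigrading bookkeeping establishing that $D$ and $\hat{J}$ strictly raise $w$ while $\hat\delta$ preserves it, so that the associated-graded differential is genuinely $\hat\delta$ and nothing more; everything else is formal. Should one prefer to avoid spectral sequences, an equivalent route is to extract an explicit contracting homotopy $h$ for $\hat\delta$ from the proof of Lemma~\ref{l:deltaCohom} and feed the perturbation $D+\hat{J}$ into the homological perturbation lemma: because $D+\hat{J}$ strictly raises $w$ whereas $h$ lowers total degree, the composite $h\circ(D+\hat{J})$ strictly raises $w$ and is therefore nilpotent on each finite-dimensional $C^n$, so the perturbation series terminates and transfers the contraction to $\ud$. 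I expect the spectral-sequence phrasing to be the shorter of the two.
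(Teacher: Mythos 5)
Your proof is correct, and it reaches the result by a genuinely different formalization than the paper. Your bookkeeping checks out: over a point $\rho=0$, $t=\id$, and $D$, $\hat{J}$, $\hat\delta$ have bidegrees $(+1,0)$, $(+3,-1)$, $(-1,+1)$ on $(p,q)$, so the weight $w=p+q$ is raised by $1$, raised by $2$, and preserved, respectively; the decreasing $w$-filtration is bounded in each total degree, its associated-graded differential is $\hat\delta$, and Lemma~\ref{l:deltaCohom} applies verbatim to compute $E_1$ because $\hat\delta$ depends only on $t^*=\id$ and not on the bracket, so $E_1$ is $\R$ concentrated in bidegree $(0,0)$ and everything degenerates. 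The paper instead argues by hand: it decomposes a cocycle $e=\sum_i e_i$ by symmetric degree $q=i$, reads off the componentwise equations $0=De_i+\hat\delta e_{i-1}+\hat{J}e_{i+1}$ (these are exactly your filtration components, reindexed via $q=n-w$), and runs a descending staircase induction, using $\hat\delta$-exactness in positive degrees to produce $r_i$ with $e_i=Dr_{i+1}+\hat\delta r_i+\hat{J}r_{i+2}$, invoking Proposition~\ref{p:D^2} and Lemma~\ref{l:Jd} explicitly to cancel the cross terms $D^2$ and $\hat\delta\hat{J}+\hat{J}\hat\delta$, and finally summing to exhibit an explicit primitive up to a constant $f\in\R$. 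So both proofs rest on the same key input (Lemma~\ref{l:deltaCohom}) and the same underlying decomposition; yours is the spectral-sequence compression of the paper's unrolled induction. What your route buys: you need only $\ud^2=0$ from Theorem~\ref{t:cohom} rather than the finer anticommutation identities, all index bookkeeping disappears, and the observation that $w=p+q$ makes the componentwise equations into a genuine $\ud$-stable filtration (a cutoff in $q$ alone, independent of total degree, is not preserved, since $\hat{J}$ lowers $q$ while $\hat\delta$ raises it) is a clean structural point the paper never isolates; the same filtration also streamlines the regular case of Proposition~\ref{p:cohom}, where $E_1=\Gamma(\wedge^\bullet L^*)$ sits on the single weight line $w=n$ with $d_1=\ud_L$ and degeneration at $E_2$ is forced for placement reasons. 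What the paper's route buys: it is elementary, avoids spectral-sequence machinery, and constructs the contracting data $\sum_i r_i$ explicitly --- which is essentially your homological-perturbation alternative written out by hand (for that variant, note only that the Koszul contracting homotopy must be chosen $w$-homogeneous, which the standard one is).
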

\begin{proof}  The anchor map is $\rho=0$ and therefore $A=F=\g$ in the notation of Section~\ref{s:cohom}.  Now we decompose $\wedge^\bullet\g^*\otimes \Symm^\bullet\g^*=\bigoplus_{i\ge0} E_i$ by the degree of the symmetric part.  Correspondingly $e\in \wedge^\bullet\g^*\otimes \Symm^\bullet\g^*$ with $\ud e=0$ decomposes as $e=\sum_i e_i$ with $e_i\in E_i$.  Note that $D\:E_i\to E_i$, $\hat\delta\:E_i\to E_{i+1}$, and $\hat{J}\:E_i\to E_{i-1}$.  Together with \eqref{ud} this gives
\[ 0 = De_i +\hat\delta e_{i-1} +\hat{J} e_{i+1}  \label{eachDegree}
\] for all $i\ge0$.  Let $N$ denote the top degree of $e$, i.e.\ all $e_i=0$ for $i>N$.  Then \eqref{eachDegree} for $i=N+1$ implies
\begin{align*}
  0 &= \hat\delta e_N
\intertext{and as long as $N>0$, there is an $r_N\in E_{N-1}$ such that}
  e_N &= \hat\delta r_N.
\intertext{For $i=N$ \eqref{eachDegree} implies}
 0 &= De_N+\hat\delta e_{N-1}  \\
  &= D\hat\delta r_N +\hat\delta e_{N-1}= \hat\delta(-D r_N+e_{N-1})
\intertext{as long as $N-1>0$ there is an $r_{N-1}\in E_{N-2}$ with}
  e_{N-1} &= D r_N +\hat\delta r_{N-1}\\
\intertext{and for $i=N-1$}
  0 &= De_{N-1} +\hat\delta e_{N-2} +\hat{J} e_N \\
   &= D^2 r_N +D\hat\delta r_{N-1} +\hat\delta e_{N-2} +\hat{J}\hat\delta r_N
\intertext{Proposition~\ref{p:D^2} and Lemma~\ref{l:Jd} imply}
  &= \hat\delta(-D r_{N-1}+e_{N-2}-\hat{J}(e_N))
\intertext{therefore as long as $i=N-2>0$ there is an $r_{N-2}\in E_{N-3}$ with}
  e_i &= D r_{i+1} +\hat\delta r_i +\hat{J}e_{i+2}.
\intertext{This goes through for all $i>0$ until}
 0 &= D e_1 +\hat\delta e_0 +\hat{J} e_2  \\
  &= \hat\delta(-D r_1+e_0 -\hat{J} r_2)
\intertext{But with $r_0=0$ there is an $f\in \wedge^0\g^*=\R$ with}
 e_0 &= D r_1+\hat\delta r_0+\hat{J} r_2 +f.
\intertext{Note that $r_1\in E_0$ and therefore}
  \hat{J}(r_1) &= 0.
\intertext{adding all up, we obtain}
 e &= \sum_{i=0}^N e_i = \sum_{i=0}^N (D r_{i+1}+\hat\delta r_i +\hat{J} r_{i+2})+f   \\
 &= \sum_{i=1}^N D r_i +\sum_{i=0}^N \hat\delta r_i +\sum_{i=2}^N \hat{J}r_i +f
\end{align*}
{But actually we can extend the first and the last sum to $i=0$.  Therefore $e$ is exact up to a constant term $f\in\R$.}
\end{proof}

The following example shows that the cohomology is non-trivial.
\begin{ex}  Let $A=TM$ be the tangent bundle of a smooth manifold.  Thus the anchor map is $\rho=\id_{TM}$ with $F=\ker\rho=M\times0$ the 0-bundle and therefore $\hat\delta=0=\hat{J}$.  But then $\ud=D$ and $C^\bullet(A,\rho)=\Omega_M^\bullet(M)$ the forms and $D=\ud_M$ the de Rham differential.  Therefore in this case $H^\bullet(A,\ud)=H_{dR}^\bullet(M)$ the de Rham cohomology which is non-trivial, e.g.\ for the sphere.
\end{ex}

Remember that given an integrable distribution $L\subset TM$, then the leaf-wise cohomology $H_l^\bullet(L)$ is the Lie algebroid cohomology of $L$.  In particular $\ud_L$ the Lie algebroid differential is given by Equation \eqref{D}.  In degree $0$ it consists of functions constant along the integral leaves of $L$.

These examples suggest the following proposition.
\begin{prop}\label{p:cohom}  Given a regular almost Lie algebroid $(A,\rho)$ over $M$ with anchor map $\rho$ and $L:=\im\rho\subset TM$, then $H^\bullet(A,\ud)\cong H_l(L)$.
\end{prop}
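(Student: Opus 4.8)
The plan is to choose a splitting of the anchor sequence, recognize $\hat\delta$ as a Koszul differential that can be contracted away, and then read off the surviving differential as the Lie algebroid differential of $L$. Since $(A,\rho)$ is regular, $F=\ker\rho$ and $L=\im\rho$ are subbundles and the sequence of vector bundles with $\rho\:A\to L$ onto its image splits; I fix $\sigma\:L\to A$ with $\rho\sigma=\id_L$. Dually $A^*=L^*\oplus F^*$, where $L^*$ is the annihilator of $F$ and $t^*\:A^*\to F^*$ is the projection. This refines the cochains into components $\Gamma(\wedge^jL^*\otimes\wedge^kF^*\otimes\Symm^qF^*)$ with $n=j+k+2q$; I write $m:=k+q$ for the total $F$-content. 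Because $\hat\delta(\alpha)=t^*(\alpha)$ vanishes on $L^*$ and is the identity on $F^*$, while $\hat\delta$ is an odd derivation vanishing on $\Symm^\bullet F^*$, the operator $\hat\delta$ leaves the $\wedge^\bullet L^*$ factor untouched and acts on the $F$-part as the usual Koszul differential $\wedge^kF^*\otimes\Symm^qF^*\to\wedge^{k-1}F^*\otimes\Symm^{q+1}F^*$, preserving $m$.

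By Lemma~\ref{l:deltaCohom} applied fibrewise, $(C^\bullet,\hat\delta)$ is acyclic wherever $m>0$. Introducing the dual Koszul operator $h$ (delete one symmetric $F^*$, insert one exterior $F^*$) one has the standard identity $\hat\delta h+h\hat\delta=m\cdot\id$, so (using characteristic zero) $g:=h/m$ on the part with $m>0$ and $g:=0$ on $m=0$ is a contracting homotopy exhibiting a deformation retraction of $(C^\bullet,\hat\delta)$ onto the subcomplex $(\Gamma(\wedge^\bullet L^*),0)$ in $m=0$, with inclusion $\iota_0$ and projection $\pi_0$.

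Now I write $\ud=\hat\delta+\partial$ with $\partial:=D+\hat{J}$ and transport the differential by the homological perturbation lemma. Since $D$ preserves the symmetric degree $q$ while $\hat{J}$ and $g$ each lower it, $g\partial$ strictly lowers $q$ and is nilpotent on every total degree; hence $(1-g\partial)^{-1}$ is a finite sum, and the lemma produces a deformation retraction of $(C^\bullet,\ud)$ onto $(\Gamma(\wedge^\bullet L^*),\ud_{\mathrm{ind}})$ with $\ud_{\mathrm{ind}}=\pi_0\,\partial\,(1-g\partial)^{-1}\iota_0$, in particular a quasi-isomorphism.

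It remains to identify $\ud_{\mathrm{ind}}$ with $\ud_L$. On $\iota_0(\Gamma(\wedge^\bullet L^*))$ the symmetric degree is $q=0$, so $\hat{J}\iota_0=0$ and $\partial\iota_0=D\iota_0$; as $D$ keeps $q=0$, its image lies in $q=0$ and is killed by $g$ (which requires $q\ge1$). Thus the series collapses and $\ud_{\mathrm{ind}}=\pi_0 D\iota_0$. Evaluating \eqref{D} on lifts $\sigma X_i$ of $X_i\in\Gamma(L)$, the anchor terms give $\rho(\sigma X_i)=X_i$ and the bracket term, seen through $L^*$ (the annihilator of $F$), only detects $\rho[\sigma X_i,\sigma X_j]=[X_i,X_j]$ by the morphism property \eqref{morph}; hence $\pi_0 D\iota_0=\ud_L$, independent of $\sigma$. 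Therefore $H^\bullet(A,\ud)\cong H^\bullet(\Gamma(\wedge^\bullet L^*),\ud_L)=H_l(L)$. I expect the two delicate points to be the vanishing $g\,\partial\iota_0=0$, which is what removes all higher perturbation corrections, and the splitting-independent identification $\pi_0 D\iota_0=\ud_L$ via \eqref{morph}; the Koszul acyclicity and the nilpotency of $g\partial$ are then routine once the $(j,k,q)$-bookkeeping is in place.
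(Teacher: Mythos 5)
Your proof is correct, and while it rests on the same two pillars as the paper's proof --- the splitting $A\cong L\oplus F$ coming from regularity, and the observation that $\hat\delta$ is a Koszul differential in the $F$-variables, acyclic except in $F$-content zero (the analogue of Lemma~\ref{l:deltaCohom}) --- the machinery you use to exploit them is genuinely different. The paper recycles the downward induction on symmetric degree from its proof of the almost-Lie-algebra case: given $\ud e=0$, it peels off $\hat\delta$-exact pieces one symmetric degree at a time, invoking Proposition~\ref{p:D^2} and Lemma~\ref{l:Jd} at each step to absorb the cross terms, until only a $D$-closed element $\alpha\in\Gamma(\wedge^\bullet L^*)$ survives, on which $\ud=D=\ud_L$. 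You instead package that entire induction into the homological perturbation lemma: the explicit contraction $g=h/m$ onto $\bigl(\Gamma(\wedge^\bullet L^*),0\bigr)$, the local nilpotency of $g\partial$ (since $\partial=D+\hat{J}$ never raises the symmetric degree $q$ while $g$ lowers it), and the collapse $g\,\partial\,\iota_0=0$ replace the step-by-step use of the identities $D^2=-\tilde{J}-L^*$ and $\hat\delta\circ\hat{J}+\hat{J}\circ\hat\delta=\tilde{J}+L^*$, which enter your argument only implicitly through $\ud^2=0$ (Theorem~\ref{t:cohom}). This buys you something concrete: a deformation retract induces inverse isomorphisms on cohomology in both directions at once, whereas the paper's induction explicitly exhibits only a $\ud_L$-closed representative in $\Gamma(\wedge^\bullet L^*)$ for each $\ud$-class and leaves the injectivity half (that such an $\alpha$ which is $\ud$-exact in $C^\bullet$ is already $\ud_L$-exact) to ``So the claim follows''; your $\pi_0$, $\iota_0$, $g$ supply that half for free. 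One sharpening of your last step: by \eqref{morph} one has $D\circ\rho^*=\rho^*\circ\ud_L$ exactly, so $D$ preserves the canonically embedded subcomplex $\rho^*\,\Gamma(\wedge^\bullet L^*)$ and both $\ud_{\mathrm{ind}}=\pi_0 D\iota_0=\ud_L$ and the independence of the splitting $\sigma$ are immediate, without evaluating on lifts. The remaining points you flag --- the Euler identity $\hat\delta h+h\hat\delta=m\cdot\id$ and the side conditions $g^2=0$, $g\iota_0=0$, $\pi_0 g=0$ needed for the simplest form of the perturbation lemma --- do all hold in this Koszul situation, so the argument is complete.
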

Note that $L$ is indeed integrable, because of the morphism property \eqref{morph} for almost Lie algebroids.
\begin{proof}  With the notation from Section 3, we can write $A\cong L\oplus F$ and therefore $\wedge^\bullet A^*\cong \wedge^\bullet L^*\otimes \wedge^\bullet F^*$.  The analogon of Lemma~\ref{l:deltaCohom} is
\[  H^\bullet(A,\hat\delta) = \Gamma(\wedge^\bullet L^*).
\]  With the notation from the previous proof, $\Gamma(\wedge^\bullet L^*)\subset E_0$ and therefore the arguments of this proof apply, except for $e_0$ where it modifies to $r_0=0$ and there exists an $\alpha\in\Gamma(\wedge^\bullet L^*)$ with
\begin{align*}
  e_0 &= Dr_1 +\hat\delta r_0+\hat{J}r_2+\alpha.
\intertext{again $\hat{J}r_1=0$ and therefore Equation~\eqref{eachDegree} for $i=-1$ implies}
  0 &= D\alpha.
\intertext{But then we have added $e\in C^\bullet(A,\rho)$ to}
  e &=\sum_{i=0}^N e_i =\ud\left(\sum_{i=0}^N r_i\right)+\alpha.
\intertext{Note that for these $\alpha\in\Gamma(\wedge^\bullet L^*)$, $\hat\delta\alpha=0=\hat{J}\alpha$ and therefore}
 \ud\alpha &= D\alpha=\ud_L\alpha
\end{align*}  the leaf-wise differential.  So the claim follows.
\end{proof}
\begin{rem} Suppose we have an $H$-twisted Poisson structure $\Pi\in\Gamma(\wedge^2TM)$ as in Example~\subref{x:2}{x:tPoisson} with $L=\im\Pi^\#$.  So $L$ Gives a singular but generalized integrable distribution.  Then the construction of Section~\ref{s:cohom} defines a cohomology for the almost Lie algebroid $(T^*M,\Pi^\#,[\cdot,\cdot]_{\Pi,H})$ and Proposition~\ref{p:cohom} suggests that it is related to $L$ itself.
\end{rem}

\appendix
\section{Odd vector fields on $\N$-manifolds}\label{a:Nmfd}
\begin{defn} An $\N$-manifold is a ringed space $(M,\gsmooth)$ where $M$ is a smooth manifold and $\gsmooth$ a sheaf of $\N$-graded commutative associative algebras such that for sufficiently small open neighborhoods $U\subset M$:
\[ \gsmooth(U)\cong \smooth(U)\otimes \wedge^\bullet \R^{p_1}\otimes \Symm^\bullet \R^{p_2}\otimes\wedge^\bullet\dots \R^{p_d}
\] for some fixed non-negative integers $d\in\N$ and $p_i\in\N$, $i=1,\dots,d$.  $\wedge^\bullet\R^p$ for $p\in\N$ is the Grassmann algebra in $p$ variables and $\Symm^\bullet\R^q$ is the symmetric (i.e.\ polynomial) algebra in $q\in\N$ variables.  The tensor product is that of $\N$-graded algebras where the elements of $\smooth(U)$ have degree 0, the generators $\R^{p_1}$ of $\wedge^\bullet\R^{p_1}$ degree 1, those $\R^{p_2}$ of $\Symm^\bullet\R^{p_2}$ degree 2 and so on.
\end{defn}
We denote the $\N$-grading of $\gsmooth$ as $\gsmooth=\bigoplus_{n\ge0} \gsmooth_n$ and the degree of $f\in\gsmooth_n$ as $|f|=n$.  If above $p_d$ is not zero, we call $d$ the maximum degree of $\M=(M,\gsmooth)$.
\begin{ex}  Let $M$ be a smooth manifold and $A\to M$ and $F\to M$ two vector bundles. $\M=A[1]\oplus F[2]$ with structure sheaf
 \[ \gsmooth(U)=\Gamma_U(\wedge^\bullet A^*\otimes \Symm^\bullet F^*)\]
 where $\Gamma_U$ denotes the sections over the open subset $U\subset M$ is an $\N$-manifold in the above sense.  Here $\Gamma(A^*)$ have degree 1 and $\Gamma(F^*)$ degree 2.  It is then obvious that the structure sheaf $\gsmooth$ is generated by $\gsmooth_0=\smooth(M)$, $\gsmooth_1=\Gamma(A^*)$ and $\Gamma(F^*)\subset\gsmooth_2$.
\end{ex}

\begin{defn}  The vector fields $\X$ are the graded derivations of the structure sheaf $\gsmooth$, i.e.\ for $X\in\X$ there is a degree $|X|\in\Z$ such that $X\:\gsmooth_{p}\to\gsmooth_{p+|X|}$ and for all $f\in\gsmooth_{|f|}$, $g\in\gsmooth$
 \[ X[fg]= X[f]g+ (-1)^{|X|\,|f|}f X[g].
 \]
\end{defn}
If $\M$ has maximum degree $d$, then vector fields are graded as $\X=\bigoplus_{n=-d}^\infty \X_n$ with every $X\in \X_n$ has degree $n=|X|$.

\begin{prop}  If $X\in\X_{|X|}$ and $Y\in\X_{|Y|}$, then $[X,Y]$ which for $f\in\gsmooth$ is defined as
 \[ [X,Y][f]:= X[Y[f]] -(-1)^{|X|\,|Y|}Y[X[f]]
 \] is a vector field of degree $|X|+|Y|$.
\end{prop}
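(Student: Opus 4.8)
The plan is to establish two things: that $[X,Y]$ shifts the $\N$-grading by $|X|+|Y|$, and that it obeys the graded Leibniz rule of that degree, so that it qualifies as a graded derivation of $\gsmooth$ and hence lies in $\X_{|X|+|Y|}$. The degree statement is immediate: $X$ raises degree by $|X|$ and $Y$ by $|Y|$, so both composites $X\circ Y$ and $Y\circ X$ carry $\gsmooth_p$ into $\gsmooth_{p+|X|+|Y|}$, and therefore so does the signed difference $[X,Y][f]=X[Y[f]]-(-1)^{|X||Y|}Y[X[f]]$. The whole content is thus the verification of the Leibniz rule.

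For that I would expand $X[Y[fg]]$ and $Y[X[fg]]$ separately, applying the graded Leibniz rule for each of the two derivations twice. The one point demanding care is that, after $Y$ has acted, the factor $Y[f]$ has degree $|f|+|Y|$; consequently, when $X$ is applied to a product containing $Y[f]$, the sign that appears is $(-1)^{|X|(|f|+|Y|)}$ rather than $(-1)^{|X||f|}$. Carrying this out, $X[Y[fg]]$ splits into four terms: one proportional to $X[Y[f]]\,g$, one to $f\,X[Y[g]]$, and two \emph{cross} terms proportional to $Y[f]\,X[g]$ and to $X[f]\,Y[g]$. Expanding $Y[X[fg]]$ gives the analogous four terms with the roles of $X$ and $Y$ interchanged.

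Finally I would assemble $[X,Y][fg]=X[Y[fg]]-(-1)^{|X||Y|}Y[X[fg]]$ and regroup by the four patterns. The terms carrying $g$ on the right recombine into $\big(X[Y[f]]-(-1)^{|X||Y|}Y[X[f]]\big)g=[X,Y][f]\,g$, and the terms carrying $f$ on the left recombine into $(-1)^{(|X|+|Y|)|f|}\,f\,[X,Y][g]$; together these are precisely the graded Leibniz rule for a derivation of degree $|X|+|Y|$. The crux, and the only genuine obstacle, is to see that the two cross terms cancel. For the $Y[f]\,X[g]$ term the coefficient from $X[Y[fg]]$ is $(-1)^{|X|(|f|+|Y|)}$ while that from $-(-1)^{|X||Y|}Y[X[fg]]$ is $-(-1)^{|X||Y|+|X||f|}$, and these are opposite because $|X|(|f|+|Y|)=|X||f|+|X||Y|$; the $X[f]\,Y[g]$ term cancels by the same kind of identity, using $(-1)^{2|X||Y|}=1$. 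All of this is formal once the shifted degree $|f|+|Y|$ of $Y[f]$ is tracked consistently, so the sign bookkeeping is the entire difficulty.
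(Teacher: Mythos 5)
Your proposal is correct and is precisely the ``straight-forward calculation'' the paper invokes without writing out: expand both composites via the graded Leibniz rule, track the shifted degree $|f|+|Y|$ of $Y[f]$, and verify the cross terms cancel, which they do exactly as you show (using $(-1)^{2|X||Y|}=1$). Nothing to add.
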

\begin{proof}  Straight-forward calculations.
\end{proof}
In particular Lemma~\ref{l:commut} is a special case of this for $X$ and $Y$ odd.\par\bigskip

A \strong{Q-structure} is now a vector field $Q\in\X_1$ such that $2Q^2=[Q,Q]=0$.  Note that this implies in particular that $Q$ is a differential of the algebra $\gsmooth$.

\begin{ex}\begin{enumerate}\item  Starting from $\M=T[1]M$ we see that $\gsmooth=\Omega_M^\bullet(M)$.  Therefore the de Rham differential $Q=\ud_M$ is an example of a Q-structure.   The cohomology of this Q-structure is therefore the de Rham cohomology.

\item  Another example arises with $\M=A[1]$ the odd vector bundle of a Lie algebroid.  Here $\gsmooth=\Gamma(\wedge^\bullet A^*)=\Omega_M^\bullet(A)$ and therefore the Lie algebroid differential $\ud_A$ defined as in \eqref{D} gives a Q-structure $Q=\ud_A$.  Its cohomology is therefore the Lie algebroid cohomology.
\end{enumerate}
\end{ex}

\newcommand{\etalchar}[1]{$^{#1}$}

\end{document}